 \newtheorem{theorem}{Theorem}[section]
 \newtheorem{lemma}[theorem]{Lemma}
\begin{document}

\title{Elliptic curves with square-free $\Delta$}

\author{Stephan Baier}
\address{Stephan Baier, Tata Institute of Fundamental Research, School of Mathematics, 1 Dr. Homi Bhaba Road, Colaba, Mumbai 400005, India}

\email{sbaier@math.tifr.res.in}

\subjclass[2000]{11L07,11D45}

\maketitle

{\bf Abstract:} Under the Riemann Hypothesis for Dirichlet L-functions, we improve on the error term in a smoothed version of an estimate for the density of elliptic curves with square-free $\Delta=D/16$, where $D$ is the discriminant, by T.D. Browning and the author \cite{BB}. To achieve this improvement, we elaborate on our methods for counting weighted solutions of inhomogeneous cubic congruences to power-ful moduli. The novelty lies in going a step further in the explicit evaluation of complete exponential sums and saving a factor by averaging over the moduli.\\ 

{\bf Keywords:} discriminants, elliptic curves, square-freeness, exponential sums, cubic congruences

\section{Main result}
Let $E$ be an elliptic curve over $\mathbb{Q}$, given in Weierstrass form
$$
E=E_{A,B}: \quad y^2=x^3+Ax+B
$$ 
for $A,B\in \mathbb{Z}$ with discriminant $-16(4A^3+27B^2)=-16\Delta_{A,B}$, say. Throughout the sequel, we shall implicitly assume that $A$ and $B$ are such that
$\Delta_{A,B}\not=0$, which is required for $E_{A,B}$ to be elliptic.

It is a very interesting question whether $\Delta_{A,B}$ is prime infinitely 
often. This is presently unknown. However, it is possible to estimate the density of elliptic curves with square-free $\Delta_{A,B}$. The exponential height of $E_{A,B}$ is defined as
$$
H\left(E_{A,B}\right)=\max\left\{|A|^{1/4},|B|^{1/6}\right\}.
$$
In \cite[Theorem 3]{BB}, we proved the following result, building on our work about inhomogeneous cubic congruences.

\begin{theorem} \label{density1}
For $	q\in \mathbb{N}$ let $\sigma(q):=\sharp\{\alpha,\beta \bmod{q} \ : \ \Delta_{\alpha,\beta}\equiv 0 \bmod{q}\}$. Let $\mu(n)$ be the M\"obius function, with the convention that 
$\mu(0)=0$ and $\mu(-n)=\mu(n)$. Then for any $\varepsilon>0$, we have
\begin{equation} \label{timi}
\sum\limits_{\substack{(A,B)\in \mathbb{Z}^2\\ H(E_{A,B})\le X}} \mu^2\left(\Delta_{A,B}\right)=4X^{10} \prod\limits_{p} \left(1-\frac{\sigma\left(p^2\right)}{p^4}\right) + O\left(X^{7+\varepsilon}\right).
\end{equation}
\end{theorem}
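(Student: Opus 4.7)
My plan is to apply the classical identity $\mu^2(n)=\sum_{d^2\mid n}\mu(d)$ (consistent with the convention $\mu(0)=0$ adopted in the statement) and swap the order of summation to obtain
\begin{equation*}
\sum_{\substack{(A,B)\in\Z^2\\ H(E_{A,B})\le X}}\mu^2(\Delta_{A,B})=\sum_{d\ge 1}\mu(d)\,N_d(X),
\end{equation*}
where $N_d(X):=\#\{(A,B)\in\Z^2:|A|\le X^4,\,|B|\le X^6,\,d^2\mid\Delta_{A,B}\}$. I would then introduce a cutoff $Y$ to separate a small-$d$ range, in which $N_d(X)$ is evaluated asymptotically, from a large-$d$ range, in which only an upper bound is required.

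For $d\le Y$, the set of residues $(a,b)\bmod d^2$ with $d^2\mid\Delta_{a,b}$ has exactly $\sigma(d^2)$ elements by definition, so a direct lattice-point count in the rectangle $|A|\le X^4$, $|B|\le X^6$ gives
\begin{equation*}
N_d(X)=\frac{4X^{10}\sigma(d^2)}{d^4}+O\!\left(\frac{\sigma(d^2)X^6}{d^2}+\sigma(d^2)\right).
\end{equation*}
Summing against $\mu(d)$ and completing the main-term sum to all $d$ using the crude bound $\sigma(p^2)\ll p^{2+\varepsilon}$ (which in particular makes the Euler product in \eqref{timi} absolutely convergent) yields the predicted main term together with the contribution of small-$d$ errors. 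On their own these force a choice $Y\asymp X^2$ and would produce only $O(X^{8+\varepsilon})$, so the large-$d$ range must also be handled nontrivially.

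The real obstacle is therefore to estimate
\begin{equation*}
\sum_{d>Y}N_d(X)=\sum_{d>Y}\;\#\bigl\{(A,B):|A|\le X^4,\,|B|\le X^6,\,4A^3+27B^2\equiv 0\pmod{d^2}\bigr\}.
\end{equation*}
Here I would follow the program of \cite{BB}: fix $A$, detect the congruence on $B$ by additive characters modulo $d^2$, apply Poisson summation in $B$, and then explicitly evaluate the resulting complete quadratic-in-$B$ exponential sum modulo $d^2$ using the Chinese Remainder Theorem together with a $p$-adic stationary-phase analysis on the prime-power factors $p^{2e_p}$ of $d^2$. The most delicate step is the treatment of primes dividing $4A^3+27B^2$ to a high power, where critical points degenerate and Gauss-type and singular-series factors appear; these must be matched against the density $\sigma(p^2)/p^4$ so that the contribution is not overcounted. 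The essential saving comes from averaging over $A$ and $d$ \emph{simultaneously}, which exhibits cancellation not available pointwise, and balancing the cutoff $Y$ against the small-$d$ errors then yields the claimed bound $O(X^{7+\varepsilon})$. All remaining steps---Möbius inversion, the lattice count, and completion of the Euler product---are routine by comparison.
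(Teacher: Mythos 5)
Your skeleton (M\"obius detection of square-freeness via $\mu^2(n)=\sum_{d^2\mid n}\mu(d)$, reduction to the congruence $4A^3+27B^2\equiv 0\bmod d^2$, a cutoff in $d$) matches the opening of the argument in \cite{BB} that the paper reproduces in its Section 2, but your division of labour is inverted in a way that creates a genuine quantitative gap: as described, the scheme cannot beat $O(X^{8+\varepsilon})$ no matter how the range $d>Y$ is handled. Your treatment of $d\le Y$ by a direct lattice count costs $\sum_{d\le Y}\sigma(d^2)X^6/d^2\gg YX^{6}$ (note $\sigma(p^2)=2p^2-p$, so $\sigma(d^2)\gg d^2$), while completing the main term costs $X^{10}\sum_{d>Y}\sigma(d^2)/d^4\gg X^{10}/Y$; both of these terms are attached to the choice of $Y$, and $YX^6+X^{10}/Y\ge 2X^8$ for every $Y$. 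You notice the $X^8$ barrier but misdiagnose it: since you explicitly ask only for an \emph{upper bound} when $d>Y$, no improvement there touches either of the two terms above. What is actually needed, and what \cite{BB} and Sections 2--4 of the present paper do, is an \emph{asymptotic} for $N_d(X)$ with an error far below the trivial boundary term $\sigma(d^2)X^6/d^2$ throughout a long range $d\le\xi$ with $\xi$ of size about $X^{9/2}$: Poisson summation is applied in \emph{both} variables $A$ and $B$, the main term is read off from the zero frequency, and the nonzero frequencies are controlled by square-root cancellation in the complete sums $\mathcal{E}(m,n;l^2)$. Only for $k>\xi$ is a pure upper bound used, namely the Estermann divisor argument giving $\ll X^{16+\varepsilon}\xi^{-2}$, which is why $\xi\approx X^{9/2}$ is forced and why the trivial lattice count could never be allowed to extend that far.

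Two further remarks on the exponential-sum step you sketch for large $d$. Applying Poisson only in $B$ with $A$ fixed leaves an average over $A$ of incomplete sums; the double Poisson summation in the paper, after extracting the common factors of $A$, $B$ and the modulus (the decomposition $k=k_2k_3hl$), produces complete one-variable sums $E(c,d;q)=\sum_{(z,q)=1}e((cz^3-dz^2)/q)$, which are mixed cubic--quadratic rather than quadratic, and the coprimality bookkeeping is essential to evaluating them. Also, your closing suggestion that the saving comes from averaging over $A$ and $d$ simultaneously is not how the $O(X^{7+\varepsilon})$ bound is obtained --- that follows from pointwise square-root cancellation in the complete sums; averaging over the modulus is precisely the \emph{new} ingredient of the present paper, and it is what pushes the exponent below $7$ under GRH with smoothing.
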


Here we present a smoothed version of this result with improved error term, where we assume the Riemann Hypothesis for Dirichlet $L$-functions. 

\begin{theorem} \label{density2}  Let $\Gamma : \mathbb{R} \rightarrow \mathbb{R}^+$ be a Schwartz class function and $\hat\Gamma$ its Fourier transform. Suppose that 
$\hat\Gamma(0)=1$. Assume that the Riemann Hypothesis holds for all Dirichlet $L$-functions. Then for any $\varepsilon>0$, we have
\begin{equation} \label{mainresult}
\sum\limits_{\substack{(A,B)\in \mathbb{Z}^2}} \Gamma\left(\frac{A}{X^4}\right)\Gamma\left(\frac{B}{X^6}\right) \mu^2\left(\Delta_{A,B}\right)=X^{10} \cdot 
\frac{1}{3}\cdot  \prod\limits_{p>3}  \left(1-\frac{2p-1}{p^3}\right) + O\left(X^{7-5/27+\varepsilon}\right).
\end{equation}
\end{theorem}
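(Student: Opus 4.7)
My approach would start from the identity $\mu^2(\Delta_{A,B}) = \sum_{d^2 \mid \Delta_{A,B}} \mu(d)$, interchange summation, and split the $d$-range at a parameter $Y = X^\eta$ to be optimised at the end. The tail $d > Y$ I would bound by invoking the unconditional estimates on weighted cubic congruence counts established in \cite{BB}; as in the proof of Theorem \ref{density1}, these yield a contribution of the shape $X^{10+\varepsilon}Y^{-\alpha} + X^{7+\varepsilon}Y^{\beta}$ for suitable positive constants $\alpha,\beta$. All of the work then lies in sharpening the treatment of the head $d \le Y$.

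For the head, I would parametrise $\Delta_{a,b}\equiv 0 \bmod d^2$ by residue classes and apply Poisson summation in $A$ and in $B$ using the Schwartz decay of $\Gamma$. The zero frequency contributes $X^{10}\hat\Gamma(0)^2 \sum_{d \le Y} \mu(d)\sigma(d^2)/d^4$, which after completion of the $d$-sum yields $X^{10}\prod_p(1 - \sigma(p^2)/p^4)$; computing $\sigma(p^2)$ explicitly for $p\in\{2,3\}$ (where $4$ and $27$ fail to be units mod $p$) and for $p > 3$ identifies this with the announced constant $\tfrac{1}{3}\prod_{p>3}(1 - (2p-1)/p^3)$. The nonzero frequencies $(h,k) \ne (0,0)$ produce the complete exponential sums
\[
T_d(h,k) = \sum_{\substack{(a,b) \bmod d^2 \\ \Delta_{a,b}\equiv 0 \bmod d^2}} e\!\left(\frac{ha+kb}{d^2}\right),
\]
weighted by $\hat\Gamma(hX^4/d^2)\hat\Gamma(kX^6/d^2)/d^4$; the rapid decay of $\hat\Gamma$ effectively restricts $|h|\ll d^2/X^4$ and $|k|\ll d^2/X^6$, reducing the off-diagonal contribution to a double sum over $d \le Y$ and short frequency ranges.

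The heart of the argument, and the main obstacle, is to bound this off-diagonal contribution with a genuine saving over \cite{BB}. Following the strategy flagged in the abstract, I would first push the explicit evaluation of $T_d(h,k)$ one step further: reduce to prime-power moduli by the Chinese remainder theorem, resolve $4a^3 + 27b^2 \equiv 0 \bmod d^2$ via a cubic change of variables together with Hensel lifting (treating $p \in \{2,3\}$ and the smooth versus rough parts of $d$ separately), and complete the square in $b$, so as to express $T_d(h,k)$ as a product of Gauss/Kloosterman-type sums and an explicit Dirichlet character sum attached to the odd squarefree kernel of $d$. Second, rather than bounding $T_d(h,k)$ pointwise, I would average over $d \le Y$ at this stage: the $d$-sum then collapses onto a sum of values of Dirichlet $L$-functions for characters of modulus up to $Y$, and GRH for these $L$-functions replaces the trivial square-root bound by essentially $Y^{\varepsilon}$, saving a factor of order $Y^{1/2}$ over the pointwise treatment.

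Balancing this improved head estimate against the cubic-congruence tail bound by an appropriate choice of $\eta$ should produce the stated final error $O(X^{7-5/27+\varepsilon})$; the specific exponent $5/27$ will emerge from the optimisation. The most delicate technical point will be the explicit structural identification of $T_d(h,k)$: the cubic (rather than diagonal) form of the congruence $\Delta_{a,b}\equiv 0$, combined with the powerful modulus $d^2$, makes the extraction of the Dirichlet character sum subtle, and one has to arrive at a form where GRH can be applied to a single Dirichlet $L$-function rather than to a moment-type average, so that the full power-saving is realised.
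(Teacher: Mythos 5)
Your opening moves match the paper: Mobius detection of square-freeness, splitting the modulus range at a parameter, bounding the tail via the unconditional congruence counts, double Poisson summation in $A$ and $B$, and extraction of the main term from the zero frequency (including the correct identification of the constant). The genuine gap is in the heart of your argument, where you propose to evaluate the complete sums $T_d(h,k)$ as Gauss/Kloosterman sums times ``an explicit Dirichlet character sum attached to the odd squarefree kernel of $d$'' and then claim that the average over $d\le Y$ collapses onto values of Dirichlet $L$-functions to which GRH applies, saving $Y^{1/2}$. This step would fail as described. The explicit evaluation of $T_d(h,k)$ (carried out in the paper) produces terms of the shape $e\bigl(-\overline{h}^{\,2}k^3/d^2\bigr)$, where the inverse is taken modulo $d^2$: the dependence on the modulus $d$ is through a Kloosterman-type fraction, not through a Dirichlet character of bounded conductor, and there is no direct way to recognise the $d$-average as an $L$-value. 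To make the modulus variable appear \emph{linearly} in an exponential --- which is what is actually needed before one can invoke GRH via $\sum_n \mu(n)\chi(n)\ll t^{1/2}X^{\varepsilon}$ --- the paper must first flip the Kloosterman fraction, $e(-\overline{m}^2n^3/k^2)=e(\overline{k}^2n^3/m^2)\,e(-n^3/(m^2k^2))$, perform a \emph{second} Poisson summation in $n$ to the modulus $m^2$, evaluate the resulting cubic exponential integrals asymptotically by stationary phase, and explicitly evaluate a second family of complete cubic sums; only after all of this does a factor $e(\gamma l_1)$ linear in a divisor $l_1$ of the modulus emerge, and the GRH saving (which is of order $s^{1/6}$ in the relevant range, not $s^{1/2}$) is obtained by averaging over $l_1$ in arithmetic progressions. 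None of this machinery is present in your outline, and without it the claimed reduction to Dirichlet $L$-functions does not go through.

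A secondary but real issue is that the final optimisation is asserted rather than performed: the exponent $7-5/27$ arises in the paper from balancing six distinct error terms involving \emph{two} parameters (the truncation $\xi$ and an interior splitting parameter $\kappa$ governing the size of $hd$), chosen as $\kappa=16/31$ and $\xi=X^{124/27}$. With only the single parameter $Y=X^{\eta}$ and unspecified exponents $\alpha,\beta$ in the tail bound, there is no way to verify that your scheme would reproduce, or even approach, the stated error term.
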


Thereby, we have also computed the Euler product in Theorem \ref{density1} explicitly. We note that the factor 4 in \eqref{timi}, which comes from counting positive {\it and} non-positive $A$'s and $B$'s, is not present in \eqref{mainresult} due to the fact that we work with smooth weights satisfying $\hat\Gamma(0)=1$.
 
Our method elaborates on that in \cite{BB}.  Rather than using our bounds for the number of solutions of inhomogeneous cubic congruences obtained in \cite{BB} directly, we here redo our treatment of them, specified for congruences of the form $4A^3+27B^2 \equiv 0 \bmod{k^2}$, and go a step further in the evaluation of exponential sums. Let us briefly describe how we proceed. 

First, we detect the square-freeness of $\Delta_{A,B}$ using the M\"obius function, leading to congruences of the form $4A^3+27B^2 \equiv 0 \bmod{k^2}$. To count their solutions, we apply the Poisson summation formula twice. This leads to complete cubic exponential sums to square moduli which we evaluate explicitly. Roughly, we are left to terms of the form
$$
e\left(-\frac{\overline{m}^2n^3}{k^2}\right),
$$
where $m$, $n$ are variables.  Now we flip the Kloosterman fraction by means of the identity
$$
e\left(-\frac{\overline{m}^2n^3}{k^2}\right)=e\left(\frac{\overline{k}^2n^3}{m^2}\right) e\left(-\frac{n^3}{m^2k^2}\right).
$$
The second exponential term on the right-hand side is slowly oscillating if considered as a function of $n$. We sum up over $n$ and use Poisson summation again, but now for modulus $m^2$. This leads to cubic exponential integrals and again to complete cubic exponential sums. So far, our method agrees with that in \cite{BB}. The novelty comes in the next steps. We evaluate the said complete cubic exponential sums for modulus $m^2$ explicitly, in contrast to our work in \cite{BB}, where we just bounded them. Moreover, we work with an asymptotic estimate for the said cubic exponential integrals instead of  upper bounds. Then we average over the moduli, which essentially leads to linear exponential sums with M\"obius function of the form
$$
\sum\limits_{k\le x} \mu(k)\cdot e(w k),
$$
where $w$ is a real number. It is this averaging which gives us an extra saving. 

In our work, we shall follow the usual convention that $\varepsilon$ can change from  line to line if no confusions are anticipated. \\

{\bf Acknowledgement.} The author wishes to thank the Tata Institute of Fundamental Research in Mumbai (India) for its warm hospitality, excellent working conditions and financial support by an ISF-UGC grant.  

\section{Reduction to inhomogeneous cubic congruences}
We start as in \cite{BB}. Using
$$
\mu^2(n)=\sum\limits_{k^2|n} \mu(k),
$$
we write 
\begin{equation} \label{ksums}
\sum\limits_{\substack{(A,B)\in \mathbb{Z}^2}} \Gamma\left(\frac{A}{X^4}\right)\Gamma\left(\frac{B}{X^6}\right) \mu^2\left(\Delta_{A,B}\right)=
\sum\limits_{k=1}^\infty \mu(k) S(X;k^2)=:S(X), 
\end{equation}
where 
$$
S(X;k^2):=\sum\limits_{\substack{(A,B)\in \mathbb{Z}^2\\ 4A^3+27B^2 \equiv 0 \bmod{k^2} }} \Gamma\left(\frac{A}{X^4}\right)\Gamma\left(\frac{B}{X^6}\right).
$$
We split the sum over $k$ into two parts, according to the size of $k$. Let $1\le \xi \le X^{100}$ be a parameter, to be fixed later. We define
\begin{equation} \label{splitting}
S(X)=S_1(X)+S_2(X),
\end{equation}
where 
\begin{equation} \label{S1}
S_1(X):=\sum\limits_{k\le \xi} \mu(k) S(X;k^2) 
\end{equation}
and
\begin{equation} \label{S2}
S_2(X):= \sum\limits_{k> \xi} \mu(k) S(X;k^2).
\end{equation}
Following \cite[Eq. (2.3)]{BB}, using an argument of Estermann \cite{Es}, we have 
$$
S_2(X)\ll \sum\limits_{|A|\le X^{4+\varepsilon}}  \sum\limits_{0< |m|\ll X^{12+\varepsilon}\xi^{-2}} \mathop{\sum\limits_{|B|\le X^{6+\varepsilon}} \sum\limits_{k\in \mathbb{N}}}_{4A^3=mk^2-27B^2} 1\ll X^{16+\varepsilon}\xi^{-2}.
$$

\section{Application of Poisson summation I}
We now transform the term $S_1(X)$. First, we rewrite $S_1(X;k^2)$, where we assume that $k$ is square-free. As in \cite{BB}, we write $k=k_2k_3k'$, where $k_2=(k,2)$ and 
$k_3=(k,3)$ and $k'$ is coprime to 6. It readily
follows that $k_2|B$ and $k_3|A$ in the summand. Making the change of variables $A=k_3A'$ and $B = k_2B'$ we deduce that
$$
S_1(X)=\sum\limits_{k_2|2} \sum\limits_{k_3|3} \mu(k_2)\mu(k_3) \sum\limits_{\substack{k'\le\xi/(k_2k_3)\\ (k',6)=1}} \mu(k') 
\sum\limits_{\substack{(A',B')\in \mathbb{Z}^2\\ b^2A'^3+a^3B'^2 \equiv 0 \bmod{(k')^2}}} \Gamma\left(\frac{k_3A'}{X^4}\right)\Gamma\left(\frac{k_2B'}{X^6}\right),
$$
where $a = 3/k_3$ and $b = 2/k_2$. In particular it follows that $(ab,k')=1$ and $a,b\le 3$. We will need to
account for possible common factors of $A'B'$ and $k'$. Drawing out the greatest common divisor of
$B'$ and $k'$ we write $B' = hx$ and $k' = hl$, with $(x,l) = 1$. It easily follows from the square-freeness
of $k$ that $h|A'$ and we can write $A' = hy$ with $(hxy,l) = 1$. Hence,
\begin{equation*}
\begin{split}
S_1(X)= & \sum\limits_{k_2|2} \sum\limits_{k_3|3} \sum\limits_{\substack{h\le \xi/(k_2k_3) \\ (h,6)=1}}  \mu(k_2)\mu(k_3)\mu(h)\sum\limits_{\substack{ l \le \xi/(k_2k_3h) \\ (l,6h)=1}} \mu(l) \times\\ &  
\sum\limits_{\substack{(x,y)\in \mathbb{Z}^2\\ a^3x^2+b^2hy^3 \equiv 0 \bmod l^2\\(xy,l)=1}} \Gamma\left(\frac{k_2hx}{X^6}\right)\Gamma\left(\frac{k_3hy}{X^4}\right),
\end{split}
\end{equation*} 
with $(abh, l)=1$. Now applying the Poisson summation formula to both the sums over $x$ and $y$, we deduce that
\begin{equation} \label{afterpoisson}
\begin{split}
S_1(X)= & X^{10} \sum\limits_{k_2|2} \sum\limits_{k_3|3} \sum\limits_{\substack{h\le \xi/(k_2k_3) \\ (h,6)=1}} \frac {\mu(k_2)}{k_2}\cdot \frac{\mu(k_3)}{k_3}\cdot \frac{\mu(h)}{h^2} 
\cdot \sum\limits_{\substack{ l \le \xi/(k_2k_3h) \\ (l,6h)=1}} \frac{\mu(l)}{l^4}\times \\
&  \sum\limits_{m\in \mathbb{Z}} \sum\limits_{n\in \mathbb{Z}} 
\hat\Gamma\left(\frac{X^6m}{k_2hl^2}\right) \hat\Gamma\left(\frac{X^4n}{k_3hl^2}\right) \mathcal{E}(m,n;l^2),
\end{split}
\end{equation}
where
$$
\mathcal{E}(m,n;l^2)=\sum\limits_{\substack{c,d \bmod l^2\\ a^3c^2+b^2hd^3 \equiv 0 \bmod l^2\\ (cd,l)=1}} e\left(\frac{cm+dn}{l^2}\right).
$$
We note that the inner double sum appearing in \eqref{afterpoisson} is equal to 
$$
\mathscr{S}\left(0,0,\frac{X^6}{k_2h},\frac{X^4}{k_3h};a^3,b^2h;l^2\right)
$$
in the notation of \cite[Eq. (5.1)]{BB}. 
As seen in \cite[section 5]{BB}, we can write the above complete exponential sum in two variables, $\mathcal{E}(m,n;l^2)$,
as a complete exponential sum over a single variable, namely
\begin{equation} \label{exptrans}
\mathcal{E}(m,n;l^2)=E(a^3b^4h^2m,a^3b^2hn;l^2), 
\end{equation}
where we define
$$
E(c,d;q):=\sum\limits_{\substack{z=1\\ (z,q)=1}}^{q} e\left(\frac{cz^3-dz^2}{q}\right).
$$

\section{Explicit evaluation of exponential sums I}  
Recall that $(abh,l)=1=(l,6)$ and $l$ is square-free. The exponential sum in the last section can be evaluated explicitly as follows.
\begin{equation} \label{of}
\begin{split}
& E(a^3b^4h^2m,a^3b^2hn;l^2)\\ = & 
 \sum\limits_{\substack{x=1\\ (x,l)=1}}^l  \sum\limits_{y=1}^l e\left(\frac{a^3b^4h^2m(x+ly)^3-a^3b^2hn(x+ly)^2}{l^2}\right)\\
= & \sum\limits_{\substack{x=1\\ (x,l)=1}}^l e\left(\frac{a^3b^4h^2mx^3-a^3b^2hnx^2}{l^2}\right) \sum\limits_{y=1}^l e\left(\frac{3a^3b^4h^2mx^2-2a^3b^2hnx}{l}\cdot y\right)\\
= & l \sum\limits_{\substack{x=1\\ (x,l)=1\\ 3a^3b^4h^2mx^2-2a^3b^2hnx\equiv 0 \bmod{l}}}^l e\left(\frac{a^3b^4h^2mx^3-a^3b^2hnx^2}{l^2}\right)\\
= & l \sum\limits_{\substack{x=1\\ (x,l)=1\\ mx\equiv \overline{3} \cdot 2\overline{b}^2\overline{h}n \bmod{l}}}^l e\left(\frac{a^3b^4h^2mx^3-a^3b^2hnx^2}{l^2}\right).
\end{split}
\end{equation}
Let $d=(m,l)$, $m_1=m/d$ and $l_1=l/d$. We note that $(m_1,l_1)=1$ and $(l_1,d)=1$ by square-freeness of $l$. The linear congruence in the last line of \eqref{of} is solvable if and only if $d|n$. Hence, this exponential sum is non-zero if and only if 
$d|n$, in which case we set $n_1=n/d$. Now we use the multiplicativity of the exponential sums $E(c,d;q)$. By \cite[Lemma 6]{BB}, we have
$$
E(a^3b^4h^2m,a^3b^2hn;l^2)=E(a^3b^4h^2m\overline{l_1}^2,a^3b^2hn\overline{l_1}^2;d^2)E(a^3b^4h^2m\overline{d}^2,a^3b^2hn\overline{d}^2;l_1^2),
$$
where $\overline{l_1}$ is a multiplicative inverse of $l_1$ modulo $d^2$, and $\overline{d}$ is a multiplicative inverse of $d$ modulo $l_1^2$. We further deduce 
that
\begin{equation} \label{expsumev}
E(a^3b^4h^2m,a^3b^2hn;l^2)=dE(a^3b^4h^2m_1\overline{l_1}^2,a^3b^2hn_1\overline{l_1}^2;d)E(a^3b^4h^2m_1\overline{d},a^3b^2hn_1\overline{d};l_1^2).
\end{equation}

If $m_1=0$, then it follows that $m=0$, $d=l$ and $l_1=1$ and hence
\begin{equation} \label{expsum1}
E(a^3b^4h^2m,a^3b^2hn;l^2)=dE(0,a^3b^2hn_1;d)=d\varphi(e) E(0,a^3b^2hn_2;d_2),
\end{equation}
where $e=(n_1,d)$, $n_2=n_1/e$, $d_2=d/e$, and $\varphi(e)$ is the Euler totient function. The last exponential sum is a quadratic Gauss sum with coprimality constraint to square-free modulus $d_2$. Its precise value is
$$
E\left(0,a^3b^2hn_2;d_2\right)= f\left(a^3b^2hn_2;d_2\right),
$$
where 
$$
f(c;q):=\prod\limits_{p|q} \left(\epsilon_p\left(\frac{-cq/p}{p}\right)\sqrt{p}-1\right)
$$
with $\left(\frac{c}{p}\right)$ being the Legendre symbol and 
$$
\epsilon_p:=\begin{cases} 1 & \mbox{if } p\equiv 1 \bmod 4,\\  i & \mbox{if } p\equiv -1 \bmod 4. \end{cases}
$$
Altogether, if $m_1=0$, then 
\begin{equation} \label{expsum0}
E(a^3b^4h^2m,a^3b^2hn;l^2)= d\varphi(e)\cdot f\left(a^3b^2hn_2;d_2\right).
\end{equation}

Let us now assume that $m_1\not=0$. We leave the first exponential sum on the right-hand side of \eqref{expsumev} as it is and transform the second one as in \eqref{of}, leading to
\begin{equation*} 
\begin{split}
& E(a^3b^4h^2m_1\overline{d},a^3b^2hn_1\overline{d};l_1^2) \\
= & l_1 \sum\limits_{\substack{x=1\\ (x,l_1)=1\\ m_1\overline{d}x\equiv \overline{3} \cdot 2\overline{b}^2\overline{h}n_1\overline{d} \bmod{l_1}}}^{l_1} e\left(\frac{a^3b^4h^2m_1\overline{d}x^3-a^3b^2hn_1\overline{d}x^2}{l_1^2}\right) \\
= &  l_1 \sum\limits_{\substack{x=1\\ (x,l_1)=1\\ x\equiv \overline{3} \cdot 2\overline{b}^2\overline{h}n_1\overline{m_1} \bmod{l_1}}}^{l_1} e\left(\frac{a^3b^4h^2m_1\overline{d}x^3-a^3b^2hn_1\overline{d}x^2}{l_1^2}\right).
\end{split}
\end{equation*}
Using Hensel's Lemma, we can uniquely lift the solution $x$ to the congruence modulo $l_1$ in the last line to a solution of the same congruence modulo $l_1^2$. Then plugging this $x$ into the exponential term, and recalling that $a=3/k_3$ and $b=2/k_2$, we obtain
\begin{equation*}
E(a^3b^4h^2m_1\overline{d},a^3b^2hn_1\overline{d};l_1^2)=l_1 \cdot e\left(-\frac{\overline{hd} (k_2\overline{m_1})^2(\overline{k_3}n_1)^3}{l_1^2}\right).
\end{equation*}
Flipping the Kloosterman fractions now gives
\begin{equation*}
E(a^3b^4h^2m_1\overline{d},a^3b^2hn_1\overline{d};l_1^2)=l_1 \cdot e\left(-\frac{k_2^2n_1^3}{k_3^3hdm_1^2l_1^2}\right) \cdot e\left(\frac{\overline{l_1}^2k_2^2n_1^3}{k_3^3hdm_1^2}\right).
\end{equation*}
Combining this with \eqref{expsumev}, we obtain
\begin{equation} \label{expsum}
\begin{split}
E(a^3b^4h^2m,a^3b^2hn;l^2)= & l_1 \cdot E(a^3b^4h^2m_1\overline{l_1}^2,a^3b^2hn_1\overline{l_1}^2;d) \times\\ & 
e\left(-\frac{k_2^2n_1^3}{k_3^3hdm_1^2l_1^2}\right) \cdot e\left(\frac{\overline{l_1}^2k_2^2n_1^3}{k_3^3hdm_1^2}\right).
\end{split}
\end{equation}

Plugging \eqref{expsum0} and \eqref{expsum} into \eqref{afterpoisson}, we get
\begin{equation} \label{S2split}
S_1(X)=M(X)+E(X),
\end{equation}
where 
\begin{equation} \label{M}
\begin{split}
M(X)= & X^{10} \sum\limits_{k_2|2} \sum\limits_{k_3|3} \sum\limits_{\substack{h\le \xi/(k_2k_3) \\ (h,6)=1}} \sum\limits_{\substack{d_2\le \xi/(k_2k_3h)\\ (d_2,6h)=1}}
\sum\limits_{\substack{e\le \xi/(k_2k_3hd_2)\\ (e,6hd_2)=1}}\frac {\mu(k_2)}{k_2}\cdot \frac{\mu(k_3)}{k_3}\cdot \frac{\mu(h)}{h^2}\times\\ &
\frac{\mu(d_2)}{d_2^3} \cdot \frac{\mu(e)\varphi(e)}{e^3} \cdot \sum\limits_{n_2\in \mathbb{Z}}
\hat\Gamma\left(\frac{X^4n_2}{k_3hd_2}\right) \cdot f\left(a^3b^2hn_2;d_2\right)
\end{split}
\end{equation}
and 
\begin{equation} \label{E}
\begin{split}
E(X)= & X^{10} \sum\limits_{k_2|2} \sum\limits_{k_3|3} \sum\limits_{\substack{h\le \xi/(k_2k_3) \\ (h,6)=1}} \sum\limits_{\substack{d\le \xi/(k_2k_3h)\\ (d,6h)=1}} \frac {\mu(k_2)}{k_2}\cdot \frac{\mu(k_3)}{k_3}\cdot \frac{\mu(h)}{h^2}\cdot \frac{\mu(d)}{d^3} \times\\ & 
\sum\limits_{\substack{ l_1 \le \xi/(k_2k_3hd) \\ (l_1,6hd)=1}} \frac{\mu(l_1)}{l_1^3}\cdot
\sum\limits_{\substack{m_1\in \mathbb{Z}\setminus\{0\}\\ (m_1,l_1)=1}} 
\hat\Gamma\left(\frac{X^6m_1}{k_2hdl_1^2}\right) \sum\limits_{n_1\in \mathbb{Z}} 
\hat\Gamma\left(\frac{X^4n_1}{k_3hdl_1^2}\right) e\left(-\frac{k_2^2n_1^3}{k_3^3hdm_1^2l_1^2}\right) \times \\ & 
E(a^3b^4h^2m_1\overline{l_1}^2,a^3b^2hn_1\overline{l_1}^2;d) \cdot e\left(\frac{\overline{l_1}^2k_2^2n_1^3}{k_3^3hdm_1^2}\right).
\end{split}
\end{equation}
The term $M(X)$ will be the main term, and $E(X)$ an error term whose treatment will be the key part of this paper and carried out from section \ref{PS2} onwards. First we shall deal with $M(X)$.

\section{Evaluation of the main term}
We split $M(X)$ into two parts $M_0(X)$ and $E_0(X)$, $M_0(X)$ being the contribution of $n_2=0$, and $E_0(X)$ being the contribution of $n_2\not=0$. Hence, 
\begin{equation} \label{Msplit}
M(X)=M_0(X)+E_0(X),
\end{equation}
where 
\begin{equation} \label{M0}
\begin{split}
M_0(X)= & X^{10} \sum\limits_{k_2|2} \sum\limits_{k_3|3} \sum\limits_{\substack{h\le \xi/(k_2k_3) \\ (h,6)=1}} 
\sum\limits_{\substack{e\le \xi/(k_2k_3h)\\ (e,6h)=1}}\frac {\mu(k_2)}{k_2}\cdot \frac{\mu(k_3)}{k_3}\cdot \frac{\mu(h)}{h^2}\cdot \frac{\mu(e)\varphi(e)}{e^3} 
\end{split}
\end{equation}
and
\begin{equation*} 
\begin{split}
E_0(X)= & X^{10} \sum\limits_{k_2|2} \sum\limits_{k_3|3} \sum\limits_{\substack{h\le \xi/(k_2k_3) \\ (h,6)=1}} \sum\limits_{\substack{d_2\le \xi/(k_2k_3h)\\ (d_2,6h)=1}}
\sum\limits_{\substack{e\le \xi/(k_2k_3hd_2)\\ (e,6hd_2)=1}}\frac {\mu(k_2)}{k_2}\cdot \frac{\mu(k_3)}{k_3}\cdot \frac{\mu(h)}{h^2}\times\\ & \frac{\mu(d_2)}{d_2^3} \cdot 
\frac{\mu(e)\varphi(e)}{e^3} \cdot  \sum\limits_{\substack{n_2\in \mathbb{Z}\\ n_2\not=0}}
\hat\Gamma\left(\frac{X^4n_2}{k_3hd_2}\right) f\left(a^3b^2hn_2;d_2\right).
\end{split}
\end{equation*}

The term $M_0(X)$ will yield the main contribution and can be easily evaluated as follows. Completing the inner-most sum over $e$ on the right-hand of \eqref{M0} to a series, and writing this series as an Euler
product, we obtain
\begin{equation*}
\begin{split}
M_0(X)= & X^{10} \sum\limits_{k_2|2} \sum\limits_{k_3|3} \sum\limits_{\substack{h\le \xi/(k_2k_3) \\ (h,6)=1}}  
\frac{\mu(k_2)}{k_2}\cdot \frac{\mu(k_3)}{k_3} \cdot \frac{\mu(h)}{h^2} \cdot \left(\sum\limits_{\substack{ e=1 \\ (e,6h)=1}}^{\infty}  \frac{\mu(e)\varphi\left(e\right)}{e^3}
+O\left(\frac{h}{\xi}\right)\right)\\
= & X^{10} \sum\limits_{k_2|2} \sum\limits_{k_3|3}  
\frac{\mu(k_2)}{k_2}\cdot \frac{\mu(k_3)}{k_3} \cdot \sum\limits_{\substack{h\le \xi/(k_2k_3) \\ (h,6)=1}}  \frac{\mu(h)}{h^2} \cdot \prod\limits_{p|h} \left(1-\frac{p-1}{p^3}\right)^{-1} \times\\ &
 \prod\limits_{p> 3} \left(1-\frac{p-1}{p^3}\right) + O\left(\frac{X^{10}}{\xi}\right).
\end{split}
\end{equation*} 
Now completing the sum over $h$ to a series, and writing this series as an Euler product, we further deduce that
\begin{equation} \label{themain}
\begin{split}
M_0(X)= &
X^{10} \sum\limits_{k_2|2} \sum\limits_{k_3|3}\frac{\mu(k_2)}{k_2}\cdot \frac{\mu(k_3)}{k_3} \cdot \left(\sum\limits_{(h,6)=1}  \frac{\mu(h)}{h^2} \cdot \prod\limits_{p|h} \left(1-\frac{p-1}{p^3}\right)^{-1} +
O\left(\frac{1}{\xi}\right)\right)\times\\ &
 \prod\limits_{p> 3} \left(1-\frac{p-1}{p^3}\right) + O\left(\frac{X^{10}}{\xi}\right)\\
= & X^{10}  \sum\limits_{k_2|2} \sum\limits_{k_3|3} \frac{\mu(k_2)}{k_2}\cdot \frac{\mu(k_3)}{k_3} \cdot \prod\limits_{p>3}  \left(1-\frac{p}{p^3-p+1}\right) \cdot \prod\limits_{p> 3} \left(1-\frac{p-1}{p^3}\right)   +O\left(\frac{X^{10}}{\xi}\right)\\ 
= & X^{10} \cdot \frac{1}{3} \cdot  \prod\limits_{p>3}  \left(1-\frac{2p-1}{p^3}\right)    +O\left(\frac{X^{10}}{\xi}\right).
\end{split}
\end{equation}

\section{Estimation of $E_0(X)$}
We rearrange the summation to get
\begin{equation*}
\begin{split}
E_0(X)= & X^{10} \sum\limits_{k_2|2} \sum\limits_{k_3|3}  \sum\limits_{\substack{d_2\le \xi/(k_2k_3)\\ (d_2,6)=1}}
\sum\limits_{\substack{e\le \xi/(k_2k_3d_2)\\ (e,6d_2)=1}}\frac {\mu(k_2)}{k_2}\cdot \frac{\mu(k_3)}{k_3}\cdot \frac{\mu(d_2)}{d_2^3} \cdot \frac{\mu(e)\varphi(e)}{e^3} \times \\ &
\sum\limits_{\substack{n_2\in \mathbb{Z}\\ n_2\not=0}} \sum\limits_{\substack{h\le \xi/(k_2k_3d_2e)\\ (h,6k_2k_3d_2e)=1}}
\frac{\mu(h)}{h^2} \cdot \hat\Gamma\left(\frac{X^4n_2}{k_3hd_2}\right) f\left(a^3b^2hn_2;d_2\right).
\end{split}
\end{equation*}
Since $\hat\Gamma(x)$ has rapid decay, we can cut the summation over $n_2$ at $|n_2|<\xi X^{\varepsilon-4}$ and the summation over $h$ at $h>X^4n_2(k_3d_2)^{-1}X^{-\varepsilon}$ at the cost of an altogether error of size $O(1)$. Hence, taking $f\left(a^3b^2hn_2;d_2\right)\ll d_2^{1/2+\varepsilon}$ into account, we obtain
\begin{equation} \label{E0est}
\begin{split}
E_0(X)\ll & 1+X^{10+\varepsilon} \sum\limits_{k_2|2} \sum\limits_{k_3|3}  \sum\limits_{d_2\le \xi}
\sum\limits_{e\le \xi} \frac{1}{d_2^{5/2}} \cdot \frac{1}{e^2}
\sum\limits_{0<|n_2|\le \xi X^{\varepsilon-4}} 
\sum\limits_{h>X^4n_2(k_3d_2)^{-1}X^{-\varepsilon}} \frac{1}{h^2}\\
\ll &1+X^{10+\varepsilon}  \sum\limits_{d_2\le \xi}
\sum\limits_{e\le \xi} \frac{1}{d_2^{5/2}} \cdot \frac{1}{e^2}\cdot  \sum\limits_{0<|n_2|\le \xi X^{\varepsilon-4}} \frac{d_2}{X^4n_2}
=O\left(X^{6+\varepsilon}\right).
\end{split}
\end{equation}

\section{Partial estimation of $E(X)$}
It remains to estimate the error term $E(X)$, defined in \eqref{E}. In this section, we consider the contribution of large $hd$ to  $E(X)$. We shall need the following lemma which is a consequence of \cite[Lemma 10]{BB}. 

\begin{lemma} \label{1} Let $s,t\in \mathbb{Z}$ and $Q\in \mathbb{N}$. Then 
$$
E(s,t;Q)\ll (s,t,Q)^{1/2}Q^{1/2+\varepsilon}.
$$ 
\end{lemma}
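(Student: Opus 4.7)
The strategy is a standard two-stage reduction: factor $Q$ into prime powers, then within each prime power peel off the common divisor $(s,t,p^a)$ so that \cite[Lemma 10]{BB} can be applied to a reduced sum in which $s,t$ are jointly coprime to the modulus.

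First I would verify that $E(s,t;Q)$ is twisted-multiplicative in $Q$: if $Q=Q_1Q_2$ with $(Q_1,Q_2)=1$, then the Chinese remainder parametrisation $z\equiv z_1\overline{Q_2}Q_2+z_2\overline{Q_1}Q_1\pmod{Q}$ yields
$$
E(s,t;Q_1Q_2)=E(s\overline{Q_2},t\overline{Q_2};Q_1)\cdot E(s\overline{Q_1},t\overline{Q_1};Q_2),
$$
where $\overline{Q_i}$ is the inverse of $Q_i$ modulo $Q_{3-i}$. Since both $(s,t,Q)$ and $Q^{1/2+\varepsilon}$ are multiplicative in $Q$, it suffices to prove the claim for prime power moduli $Q=p^a$.

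Fix such a $Q=p^a$ and set $c:=\min\{v_p(s),v_p(t),a\}$, so that $p^c=(s,t,p^a)$. If $c=a$ the phase is trivial and $E(s,t;p^a)=\varphi(p^a)\le p^a=p^{c/2}\cdot p^{a/2}$, which already satisfies the asserted bound. Otherwise I would write $s=p^cs'$, $t=p^ct'$ with $\min\{v_p(s'),v_p(t')\}=0$, so that the exponential in
$$
E(s,t;p^a)=\sum_{\substack{z\bmod p^a\\ (z,p)=1}} e\!\left(\frac{s'z^3-t'z^2}{p^{a-c}}\right)
$$
depends only on $z\bmod p^{a-c}$. Each residue class $z_0\bmod p^{a-c}$ with $(z_0,p)=1$ lifts to exactly $p^c$ residue classes $z\bmod p^a$ with $(z,p)=1$, giving the identity
$$
E(s,t;p^a)=p^c\cdot E(s',t';p^{a-c}),\qquad (s',t',p^{a-c})=1.
$$

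At this point I would invoke \cite[Lemma 10]{BB}, which I expect to furnish a Weil-type estimate of the shape $E(s',t';p^b)\ll p^{b/2+\varepsilon}$ whenever $(s',t',p^b)=1$ (with the usual ad hoc handling of the small primes $p\in\{2,3\}$ where Weil's inequality does not apply directly, absorbed into the $\varepsilon$). Combined with the preceding display this yields
$$
E(s,t;p^a)\ll p^c\cdot p^{(a-c)/2+\varepsilon}=p^{c/2}\cdot p^{a/2+\varepsilon}=(s,t,p^a)^{1/2}(p^a)^{1/2+\varepsilon},
$$
and multiplying over the prime power decomposition of $Q$ delivers the claim. The only substantive ingredient is the square-root cancellation supplied by \cite[Lemma 10]{BB} in the coprime regime; everything else is a bookkeeping exercise in the Chinese remainder theorem together with the Hensel-like descent $z\mapsto z\bmod p^{a-c}$.
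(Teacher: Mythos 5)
The paper supplies no proof of this lemma beyond the remark that it is a consequence of \cite[Lemma 10]{BB}, so there is no detailed argument to compare yours against. Your reconstruction is correct and is the standard route: twisted multiplicativity of $E$ via the Chinese remainder theorem (which preserves the factor $(s,t,Q)$ because the twisting is by units modulo each factor), extraction of $p^c=(s,t,p^a)$ at each prime power so that the phase depends only on $z \bmod p^{a-c}$ and the unit fibres have size $p^c$, and square-root cancellation for the reduced sum with $(s',t',p^{a-c})=1$. The only point you should pin down is the exact content of \cite[Lemma 10]{BB}: if it is already stated for general $s,t,Q$ with the gcd factor, your reduction is superfluous (though harmless); if it only covers the coprime or prime-modulus case, your descent is precisely the missing bookkeeping, and it is carried out correctly, with the small primes and the accumulation of implied constants over $\omega(Q)$ prime factors legitimately absorbed into $Q^{\varepsilon}$.
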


Let $0<\kappa<1$ be a real parameter, to be fixed later. Let $E^+(X)$ be the contribution of $hd>\xi^{\kappa}$ and $E^-(X)$ that of $hd\le \xi^{\kappa}$ and hence
\begin{equation} \label{Esplit}
E(X)=E^+(X)+E^-(X).
\end{equation}
We now want to bound $E^+(X)$. By Lemma \ref{1}, we have 
$$
E(a^3b^4h^2m_1\overline{l_1}^2,a^3b^2hn_1\overline{l_1}^2;d)\ll (m_1,n_1,d)^{1/2}d^{1/2+\varepsilon}
$$
and hence
\begin{equation} \label{E+esti}
\begin{split}
E^+(X)= & X^{10+\varepsilon} \sum\limits_{h>\xi^{\kappa}} \sum\limits_{\substack{\xi^{\kappa}/h<d\le \xi/h \\ (d,h)=1}}  \frac{1}{h^2}\cdot \frac{1}{d^{3/2}} \cdot  
\sum\limits_{\substack{ l_1 \le \xi/(hd) \\ (l_1,d)=1}} \frac{1}{l_1^3}\times\\ & 
\sum\limits_{0<|m_1|\le X^{\varepsilon-6}hdl_1^2} \sum\limits_{|n_1|\le X^{\varepsilon-4}hdl_1^2} (m_1,n_1,d)^{1/2},
\end{split}
\end{equation}
where we use the fact that $\hat\Gamma$ has rapid decay. The inner-most double sum in \eqref{E+esti} is easily estimated by
$$
\sum\limits_{0<|m_1|\le X^{\varepsilon-6}hdl_1^2} \sum\limits_{|n_1|\le X^{\varepsilon-4}hdl_1^2} (m_1,n_1,d) \ll d^{\varepsilon} \cdot \frac{hdl_1^2}{X^{6-\varepsilon}} \cdot
\left(1+\frac{hdl_1^2}{X^{4-\varepsilon}}\right). 
$$
Now a short calculation yields
\begin{equation} \label{E+bound}
 E^+(X)\ll \xi^{(1-\kappa)/2}X^{4+\varepsilon}+\xi^{2-\kappa}X^{\varepsilon}.
\end{equation}

\section{Application of Poisson summation II} \label{PS2}
We are left with estimating
\begin{equation} \label{E-}
\begin{split}
E^-(X)= & X^{10} \sum\limits_{k_2|2} \sum\limits_{k_3|3} \mathop{\sum\limits_{\substack{h\le \xi/(k_2k_3) \\ (h,6)=1}}\sum\limits_{\substack{d\le \xi/(k_2k_3h)\\ (d,6h)=1}}}_{hd\le \xi^{\kappa}}  \frac {\mu(k_2)}{k_2}\cdot \frac{\mu(k_3)}{k_3}\cdot \frac{\mu(h)}{h^2}\cdot \frac{\mu(d)}{d^3} \times\\ & 
\sum\limits_{\substack{ l_1 \le \xi/(k_2k_3hd) \\ (l_1,6hd)=1}} \frac{\mu(l_1)}{l_1^3}\cdot
\sum\limits_{\substack{m_1\in \mathbb{Z}\setminus\{0\}\\ (m_1,l_1)=1}} 
\hat\Gamma\left(\frac{X^6m_1}{k_2hdl_1^2}\right) \sum\limits_{n_1\in \mathbb{Z}} 
\hat\Gamma\left(\frac{X^4n_1}{k_3hdl_1^2}\right) \times \\ &  e\left(-\frac{k_2^2n_1^3}{k_3^3hdm_1^2l_1^2}\right)
E(a^3b^4h^2m_1\overline{l_1}^2,a^3b^2hn_1\overline{l_1}^2;d) \cdot e\left(\frac{\overline{l_1}^2k_2^2n_1^3}{k_3^3hdm_1^2}\right).
\end{split}
\end{equation}
 The term
$$
 \hat\Gamma\left(\frac{X^4n_1}{k_3hdl_1^2}\right)\cdot  e\left(-\frac{k_2^2n_1^3}{k_3^3hdm_1^2l_1^2}\right)
$$
will be interpreted as a slowly oscillating weight function of $n_1$. We therefore define $\Psi : \mathbb{R}\rightarrow \mathbb{C}$ as 
$$
\Psi\left(\frac{X^4z}{k_3hdl_1^2}\right):=\hat\Gamma\left(\frac{X^4z}{k_3hdl_1^2}\right) \cdot e\left(-\frac{k_2^2z^3}{k_3^3hdm_1^2l_1^2}\right),
$$
i.e.
\begin{equation} \label{Psidef}
\Psi\left(z\right):=\hat\Gamma\left(z\right) e\left(-\frac{k_2^2h^2d^2l_1^4}{X^{12}m_1^2} \cdot z^3\right).
\end{equation}
Now breaking up the summation over $n_1$ into residue classes modulo $k_3^3hdm_1^2$ and using the Poisson summation formula again, we get
\begin{equation} \label{so}
\begin{split}
& \sum\limits_{n_1\in \mathbb{Z}} \Psi\left(\frac{X^4n_1}{k_3hdl_1^2}\right) 
E(a^3b^4h^2m_1\overline{l_1}^2,a^3b^2hn_1\overline{l_1}^2;d) \cdot e\left(\frac{\overline{l_1}^2k_2^2n_1^3}{k_3^3hdm_1^2}\right)\\
= & \frac{l_1^2}{X^4k_3^2m_1^2} \cdot \sum\limits_{u\in \mathbb{Z}} \hat\Psi\left(\frac{l_1^2u}{X^4k_3^2m_1^2}\right) \sum\limits_{v=1}^{k_3^3hdm_1^2}
E(a^3b^4h^2m_1\overline{l_1}^2,a^3b^2hv\overline{l_1}^2;d) \cdot  e\left(\frac{\overline{l_1}^2k_2^2v^3+uv}{k_3^3hdm_1^2}\right).
\end{split}
\end{equation}

We write $m_1=m^{\ast}\tilde{m}$, where rad$(m^{\ast})|(6hd)$ and $(\tilde{m},6hd)=1$, where rad$(n)$ is the largest square-free divisor of the natural number $n$. Further, we write $q:=k_3^3hd(m^{\ast})^2$ and note that $(\tilde{m},q)=1$. Hence we can write the inner-most sum on the right-hand side of \eqref{so} as 
\begin{equation} \label{reduction}
\begin{split}
& \sum\limits_{v=1}^{k_3^3hdm_1^2}
E(a^3b^4h^2m_1\overline{l_1}^2,a^3b^2hv\overline{l_1}^2;d) \cdot e\left(\frac{\overline{l_1}^2k_2^2v^3+uv}{k_3^3hdm_1^2}\right) \\
=  & \sum\limits_{x=1}^{q}\sum\limits_{y=1}^{\tilde{m}^2} E(a^3b^4h^2m^{\ast}\tilde{m}\overline{l_1}^2,a^3b^2h(x\tilde{m}^2+yq)\overline{l_1}^2;d) \cdot e\left(\frac{\overline{l_1}^2k_2^2(x\tilde{m}^2+yq)^3+u(x\tilde{m}^2+yq)}{q\tilde{m}^2}\right) \\
= & \sum\limits_{x=1}^{q} E(a^3b^4h^2m^{\ast}\tilde{m}\overline{l_1}^2,a^3b^2hx\tilde{m}^2\overline{l_1}^2;d) \cdot 
e\left(\frac{\overline{l_1}^2k_2^2\tilde{m}^4x^3+ux}{q}\right) \cdot
\sum\limits_{y=1}^{\tilde{m}^2} e\left(\frac{\overline{l_1}^2k_2^2q^2y^3+uy}{\tilde{m}^2}\right).
\end{split}
\end{equation}

Furthermore, we write $q=\tilde{q}d$ and 
\begin{equation} \label{reduction2}
\begin{split}
= & \sum\limits_{x=1}^{q} E(a^3b^4h^2m^{\ast}\tilde{m}\overline{l_1}^2,a^3b^2hx\tilde{m}^2\overline{l_1}^2;d) \cdot 
e\left(\frac{\overline{l_1}^2k_2^2\tilde{m}^4x^3+ux}{q}\right)\\
= & \sum\limits_{y=1}^{\tilde{q}} \sum\limits_{z=1}^{d} E(a^3b^4h^2m^{\ast}\tilde{m}\overline{l_1}^2,a^3b^2h(dy+z)\tilde{m}^2\overline{l_1}^2;d) \cdot 
e\left(\frac{\overline{l_1}^2k_2^2\tilde{m}^4(dy+z)^3+u(dy+z)}{q}\right) \\
= & \sum\limits_{z=1}^{d} E(a^3b^4h^2m^{\ast}\tilde{m}\overline{l_1}^2,a^3b^2hz\tilde{m}^2\overline{l_1}^2;d) \cdot e\left(\frac{\overline{l_1}^2k_2^2\tilde{m}^4z^3+uz}{\tilde{q}}\right) \times\\ & \sum\limits_{y=1}^{\tilde{q}}
e\left(\frac{\overline{l_1}^2k_2^2\tilde{m}^4((dy+z)^3-z^3)+udy}{q}\right) \\
= & \sum\limits_{z=1}^{d} E(a^3b^4h^2m^{\ast}\tilde{m}\overline{l_1}^2,a^3b^2hz\tilde{m}^2\overline{l_1}^2;d) \cdot e\left(\frac{\overline{l_1}^2k_2^2\tilde{m}^4z^3+uz}{\tilde{q}}\right) \times\\ &  \sum\limits_{y=1}^{\tilde{q}}
e\left(\frac{\overline{l_1}^2k_2^2\tilde{m}^4(d^2y^3+3dy^2z+3yz^2)+uy}{\tilde{q}}\right)
\end{split}
\end{equation}

Combining \eqref{E-}, \eqref{so}, \eqref{reduction} and \eqref{reduction2}, we obtain
\begin{equation} \label{Enew}
\begin{split}
& E^-(X)\\ = & X^{6} \sum\limits_{k_2|2} \sum\limits_{k_3|3} \mathop{\sum\limits_{\substack{h\le \xi/(k_2k_3) \\ (h,6)=1}} \sum\limits_{\substack{d\le \xi/(k_2k_3h)\\ (d,6h)=1}}}_{hd\le \xi^{\kappa}} \frac {\mu(k_2)}{k_2}\cdot \frac{\mu(k_3)}{k_3^3}\cdot \frac{\mu(h)}{h^2}\cdot \frac{\mu(d)}{d^3} \cdot
\sum\limits_{\substack{ l_1 \le \xi/(k_2k_3hd) \\ (l_1,6hd)=1}} \frac{\mu(l_1)}{l_1} \times\\ & 
\sum\limits_{\substack{m^{\ast}\in \mathbb{Z}\setminus\{0\}\\ (m^{\ast},l_1)=1\\ \mbox{\tiny rad}(m^{\ast})|6hd}} \frac{1}{(m^{\ast})^2} \cdot
\sum\limits_{\substack{\tilde{m} \in \mathbb{Z}\setminus\{0\}\\ (\tilde{m},6hdl_1)=1}} \frac{1}{\tilde{m}^2} \cdot 
\hat\Gamma\left(\frac{X^6m^{\ast}\tilde{m}}{k_2hdl_1^2}\right) \cdot
\sum\limits_{u\in \mathbb{Z}} \hat\Psi\left(\frac{l_1^2u}{X^4k_3^2|m^{\ast}\tilde{m}|^2}\right)   \times\\ &  
F(\overline{l_1}^2k_2^2q^2,0,u;\tilde{m}^2)\cdot  \sum\limits_{z=1}^{d} E(a^3b^4h^2m^{\ast}\tilde{m}\overline{l_1}^2,a^3b^2hz\tilde{m}^2\overline{l_1}^2;d) \cdot e\left(\frac{\overline{l_1}^2k_2^2\tilde{m}^4z^3+uz}{\tilde{q}}\right) \times\\ &
F\left(\overline{l_1}^2k_2^2\tilde{m}^4d^2,3\overline{l_1}^2k_2^2\tilde{m}^4dz,3\overline{l_1}^2k_2^2\tilde{m}^4z^2+u;\tilde{q}\right),
\end{split}
\end{equation}
where 
\begin{equation} \label{F}
F(c_3,c_2,c_1;r):=\sum\limits_{x=1}^{r} e\left(\frac{c_3x^3+c_2x^2+c_1x}{r}\right).
\end{equation}

\section{Estimation of exponential sums}
We shall need bounds for the two exponential sums appearing in the inner-most sum in \eqref{Enew}. By Lemma \ref{1}, we have
\begin{equation} \label{dies} 
E(a^3b^4h^2m^{\ast}\tilde{m}\overline{l_1}^2,a^3b^2hz\tilde{m}^2\overline{l_1}^2;d) \ll d^{1/2+\varepsilon} \left(a^3b^2hz\tilde{m}^2\overline{l_1}^2,d\right)^{1/2} =
d^{1/2+\varepsilon}(z,d)^{1/2}.
\end{equation}
To bound the second exponential sum, we recall \cite[Lemma  4.1]{BB} which is due to Loxton and Schmidt \cite{LS}. 

\begin{lemma} \label{2} Let $Q\in \mathbb{N}$ and $f\in \mathbb{Z}[X]$. Suppose that $f'$ has degree $n$, precisely $m$ distinct roots and factorization
$$
f'(X)=A(X-\zeta_1)^{\eta_1}(X-\zeta_2)^{\eta_2}\cdots (X-\zeta_m)^{\eta_m}.
$$  
Define the semi-discriminant of $f'$ to be 
$$
\Delta=\Delta(f'):=A^{2n-2}\prod\limits_{i\not=j} (\zeta_i-\zeta_j)^{\eta_i\eta_j}
$$
and the exponent of $f'$ to be
$$
\eta=\eta(f'):=\max\{\eta_1,...,\eta_m\}.
$$
Then 
$$
\sum\limits_{x=1}^Q e\left(\frac{f(x)}{Q}\right) \le Q^{1-1/(2\eta)}(\Delta,Q)^{1/(2\eta)}n^{\omega(Q)},
$$
where $\omega(Q)$ is the number of distinct prime factors of $Q$. 
\end{lemma}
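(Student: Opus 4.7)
The plan is to reduce to prime power moduli via the Chinese Remainder Theorem. Writing $Q = p_1^{k_1} \cdots p_r^{k_r}$, the substitution $x \equiv \sum_i \overline{(Q/p_i^{k_i})} \cdot (Q/p_i^{k_i}) \cdot x_i \pmod Q$ factorises the complete sum as a product of analogous sums to prime power moduli $p_i^{k_i}$, with $f$ scaled by appropriate inverses (a scaling that leaves the degree and the multiplicity pattern of $f'$ unchanged, and alters $\Delta$ by a unit mod $p_i^{k_i}$). Since $(\Delta, Q) = \prod_i (\Delta, p_i^{k_i})$, an individual prime-power bound of the form $n \cdot p^{k(1 - 1/(2\eta))}(\Delta, p^k)^{1/(2\eta)}$ multiplies out to the claimed global bound, with the $n^{\omega(Q)}$ factor arising as the product of one factor of $n$ per prime. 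It therefore suffices to prove the lemma for $Q = p^k$.

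For $Q = p$ prime, I would appeal to the Weil bound for complete sums on $\mathbb{F}_p$ to obtain $|\sum_{x=1}^p e(f(x)/p)| \le n \sqrt{p}$, after separating the degenerate case $p \mid A$ (leading coefficient of $f'$), which forces $p \mid \Delta$. Since $\eta \ge 1$, this immediately gives $\le n \cdot p^{1-1/(2\eta)}$ when $p \nmid \Delta$, while the case $p \mid \Delta$ is absorbed by the trivial bound $p \le p^{1-1/(2\eta)} (\Delta, p)^{1/(2\eta)}$.

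For $Q = p^k$ with $k \ge 2$, I would use the standard differencing trick: write $x = y + p^{k'} z$ with $k' = \lceil k/2 \rceil$, $0 \le y < p^{k'}$, $0 \le z < p^{k - k'}$. Taylor expansion yields $f(y + p^{k'} z) \equiv f(y) + f'(y) p^{k'} z \pmod{p^k}$, since the quadratic error is divisible by $p^{2k'} \ge p^k$. The inner sum over $z$ modulo $p^{k-k'}$ then vanishes unless $p^{k-k'} \mid f'(y)$, in which case it contributes $p^{k-k'}$. Thus the entire sum is bounded by $p^{k-k'} \cdot N(p^{k-k'})$, where $N(p^j)$ denotes the number of residues $y \bmod p^{k'}$ satisfying $p^j \mid f'(y)$.

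The main obstacle, and the reason the semi-discriminant enters, is a sharp estimate for $N(p^j)$. Near a root $\zeta_i$ of $f'$ of multiplicity $\eta_i$, a Newton-polygon / Hensel lifting analysis shows that the number of $p$-adic lifts of $\zeta_i$ modulo $p^{k'}$ satisfying $p^j \mid f'(y)$ is of order $p^{j(1 - 1/\eta_i)}$, provided $\zeta_i$ is $p$-adically separated from the remaining $\zeta_\ell$. Collisions among the roots $\zeta_i$ modulo $p$ inflate this count, and the inflation is controlled precisely by the $p$-adic valuation of $\Delta = A^{2n-2} \prod_{i \ne j} (\zeta_i - \zeta_j)^{\eta_i \eta_j}$, because that valuation measures the total multiplicity-weighted merging of roots. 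The local contribution is therefore bounded by $p^{j(1 - 1/\eta_i)} (\Delta, p^j)^{1/(2 \eta_i)}$ up to a bounded constant; summing over the at most $n$ roots and combining with the pre-factor $p^{k-k'}$ for $k' = \lceil k/2 \rceil$ yields the stated bound with exponent $\eta = \max_i \eta_i$ after a short arithmetic manipulation. Extracting exactly $(\Delta, Q)^{1/(2\eta)}$ rather than a weaker exponent from the collision analysis is the delicate technical step, and is essentially the content of the Loxton-Schmidt argument in \cite{LS}.
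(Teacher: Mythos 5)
The paper does not prove this lemma at all: it is quoted verbatim as \cite[Lemma 4.1]{BB}, which in turn attributes it to Loxton and Smith \cite{LS}, so there is no in-paper argument to compare against. Your outline does follow the route actually taken in the literature --- multiplicativity via the Chinese Remainder Theorem (noting that the unit scaling of $f$ at each prime changes $\Delta$ only by a unit, so $(\Delta,Q)=\prod_p(\Delta,p^{k_p})$), the Weil bound at prime moduli, and the $x=y+p^{k'}z$ differencing at prime powers reducing the problem to counting $y \bmod p^{k'}$ with $p^{k-k'}\mid f'(y)$.

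However, as a self-contained proof your attempt has a genuine gap at exactly the decisive step, and you acknowledge as much: the bound on the number of zeros of $f'$ modulo $p^{j}$ in terms of $\eta$ and $v_p(\Delta)$ is asserted via an appeal to ``Newton-polygon / Hensel lifting analysis'' and then deferred to \cite{LS}. That counting lemma is the entire content of the result; without it nothing is proved. Two further points are glossed over by ``a short arithmetic manipulation.'' First, your stated local count $p^{j(1-1/\eta_i)}$ appears to count solutions modulo $p^{j}$, whereas the differencing argument needs the count modulo $p^{k'}$; the two differ by a factor $p^{k'-j}$ and conflating them obscures whether the exponents actually recombine to $p^{k(1-1/(2\eta))}$. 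Second, for odd $k$ the naive choice $k'=\lceil k/2\rceil$ loses a factor of roughly $p^{1/(2\eta)}$ against the claimed exponent $k(1-1/(2\eta))$, and recovering the clean bound requires an additional estimate at the middle level (this is handled in \cite{LS} but is not automatic). So: right strategy, accurate as a description of the known proof, but the core lemma is cited rather than proved and the exponent bookkeeping is not actually carried out.
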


We further need the following simple observation.

\begin{lemma} \label{3} Let $Q\in \mathbb{N}$ and $f(X)=c_nX^n+c_{n-1}X^{n-1}+...+c_1X\in \mathbb{Z}[X]$. Suppose that $\delta | (c_n,...,c_2,Q)$. Then
$$
\sum\limits_{x=1}^Q e\left(\frac{f(x)}{Q}\right)\not=0 \Longrightarrow \delta|c_1 \mbox{ and } \sum\limits_{x=1}^Q e\left(\frac{f(x)}{Q}\right)=
\delta \sum\limits_{x=1}^{\tilde{Q}} e\left(\frac{\tilde{f}(X)}{\tilde{Q}}\right),
$$ 
where $\tilde{f}(X)=f(X)/\delta\in \mathbb{Z}[X]$ and $\tilde{Q}=Q/\delta\in \mathbb{N}$. 
\end{lemma}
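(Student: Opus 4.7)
The plan is to split the residues modulo $Q$ according to their residue modulo $\tilde{Q}=Q/\delta$ and exploit the divisibility hypothesis to eliminate all but a trivial linear exponential sum in the quotient variable.

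Concretely, I would parameterize $x = y+\tilde{Q}z$ with $0\le y<\tilde{Q}$ and $0\le z<\delta$, so that $x$ runs over a complete residue system modulo $Q$. Expanding
$$
f(y+\tilde{Q}z)=\sum_{j=1}^{n} c_j (y+\tilde{Q}z)^j = \sum_{j=1}^n c_j\left(y^j + j\tilde{Q}z y^{j-1}+\binom{j}{2}\tilde{Q}^2z^2 y^{j-2}+\cdots\right),
$$
the key observation is that for every $j\ge 2$ the coefficient $c_j$ is divisible by $\delta$, and so every cross term $c_j\cdot\binom{j}{k}\tilde{Q}^k z^k y^{j-k}$ with $k\ge 1$ contains a factor $\delta\tilde{Q}=Q$ and therefore vanishes modulo $Q$. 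Only the $j=1$ term contributes a non-$y$ piece, giving
$$
f(y+\tilde{Q}z)\equiv f(y)+c_1\tilde{Q}z\pmod{Q}.
$$

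With this congruence in hand, the double sum factors:
$$
\sum_{x=1}^{Q}e\left(\frac{f(x)}{Q}\right)=\sum_{y=0}^{\tilde{Q}-1} e\left(\frac{f(y)}{Q}\right)\sum_{z=0}^{\delta-1} e\left(\frac{c_1 z}{\delta}\right).
$$
The inner geometric sum equals $\delta$ if $\delta\mid c_1$ and $0$ otherwise; this gives the stated implication $\sum\neq 0\Rightarrow\delta\mid c_1$. Assuming $\delta\mid c_1$, the hypothesis $\delta\mid(c_n,\dots,c_2,c_1)$ now ensures $\tilde{f}(X):=f(X)/\delta\in\mathbb{Z}[X]$, and rewriting $f(y)/Q=\tilde{f}(y)/\tilde{Q}$ yields the claimed identity.

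I do not expect any serious obstacle: the only point demanding a little care is checking that all the binomial-expansion terms with $k\ge 1$ and $j\ge 2$ really do disappear modulo $Q$, which follows at once from $c_j\tilde{Q}=(c_j/\delta)\cdot\delta\tilde{Q}=(c_j/\delta)Q$. The rest is just the identity $\sum_{z=0}^{\delta-1}e(c_1z/\delta)=\delta\cdot\mathbf{1}_{\delta\mid c_1}$ and relabeling the summation index from $y\in\{0,\dots,\tilde{Q}-1\}$ to $y\in\{1,\dots,\tilde{Q}\}$ via periodicity of $\tilde{f}\bmod\tilde{Q}$.
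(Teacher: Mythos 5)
Your proposal is correct and follows essentially the same route as the paper: both split $x$ as $y+\tilde{Q}z$, use $\delta\mid c_j$ for $j\ge 2$ to show the higher-degree terms are independent of $z$ modulo $Q$, and reduce the inner sum to $\sum_{z}e(c_1z/\delta)=\delta\cdot\mathbf{1}_{\delta\mid c_1}$. The only cosmetic difference is that the paper separates off the linear part $c_1(z\tilde{Q}+y)$ explicitly before factoring, whereas you carry out the binomial expansion directly; the content is identical.
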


\begin{proof} By the conditions in Lemma \ref{3}, we have
\begin{equation*}
\begin{split}
\sum\limits_{x=1}^Q e\left(\frac{f(x)}{Q}\right)= & \sum\limits_{y=1}^{\tilde{Q}} \sum\limits_{z=1}^{\delta} e\left(\frac{f(z\tilde{Q}+y)-c_1(z\tilde{Q}+y)}{Q}\right) e\left(\frac{c_1(z\tilde{Q}+y)}{Q}\right) \\
= &  \sum\limits_{y=1}^{\tilde{Q}} e\left(\frac{f(y)}{Q}\right) \sum\limits_{z=1}^{\delta} e\left(\frac{c_1z}{\delta}\right) \\
= & \sum\limits_{y=1}^{\tilde{Q}} e\left(\frac{\tilde{f}(y)}{\tilde{Q}}\right)\cdot \begin{cases} \delta & \mbox{ if } \delta|c_1\\ 0 & \mbox{ if } \delta\nmid c_1. \end{cases}
\end{split}
\end{equation*}
\end{proof}

Now we are ready to estimate $F\left(\overline{l_1}^2k_2^2\tilde{m}^4d^2,3\overline{l_1}^2k_2^2\tilde{m}^4dz,3\overline{l_1}^2k_2^2\tilde{m}^4z^2+u;\tilde{q}\right)$.  
We set $\delta:=(d,|m^{\ast}|)$. Then from Lemma \ref{3}, we deduce that
\begin{equation} \label{oje}
\begin{split}
& F\left(\overline{l_1}^2k_2^2\tilde{m}^4d^2,3\overline{l_1}^2k_2^2\tilde{m}^4dz,3\overline{l_1}^2k_2^2\tilde{m}^4z^2+u;\tilde{q}\right)\not=0 \Longrightarrow \delta|\left(3\overline{l_1}^2k_2^2\tilde{m}^4z^2+u\right)
\mbox{ and } \\
& F\left(\overline{l_1}^2k_2^2\tilde{m}^4d^2,3\overline{l_1}^2k_2^2\tilde{m}^4dz,3\overline{l_1}^2k_2^2\tilde{m}^4z^2+u;\tilde{q}\right)\\ = & \delta 
F\left(\overline{l_1}^2k_2^2\tilde{m}^4d^2\delta^{-1},3\overline{l_1}^2k_2^2\tilde{m}^4dz\delta^{-1},(3\overline{l_1}^2k_2^2\tilde{m}^4z^2+u)\delta^{-1};\tilde{q}\delta^{-1}\right).
\end{split}
\end{equation}
We assume that this is the case. Then we compute that the derivative of the polynomial
$$
f(X):=\overline{l_1}^2k_2^2\tilde{m}^4d^2\delta^{-1}X^3+3\overline{l_1}^2k_2^2\tilde{m}^4dz\delta^{-1}X^2+\left(3\overline{l_1}^2k_2^2\tilde{m}^4z^2+u\right)\delta^{-1}X \in \mathbb{Z}[X]
$$
has exponent 
$$
\eta(f')=\begin{cases} 2 & \mbox{ if } u=0\\ 1 & \mbox{ if } u\not=0 \end{cases}
$$ 
and semi-discriminant
$$
\Delta(f')= \begin{cases} \left(\overline{l_1}^2k_2^2\tilde{m}^4d^2\delta^{-1}\right)^2 & \mbox{ if } u=0 \\
12\overline{l_1}^2k_2^2\tilde{m}^4d^2\delta^{-2}u & \mbox{ if } u\not=0. \end{cases}
$$
Now from \eqref{oje} and Lemma \ref{2}, it follows that
\begin{equation} \label{dasdas}
\begin{split}
F\left(\overline{l_1}^2k_2^2\tilde{m}^4d^2,3\overline{l_1}^2k_2^2\tilde{m}^4dz,3\overline{l_1}^2k_2^2\tilde{m}^4z^2+u;\tilde{q}\right) \ll & \delta^{1/4} \tilde{q}^{3/4+\varepsilon}
\left(\left(\overline{l_1}^2k_2^2\tilde{m}^4d^2\delta^{-1}\right)^2,\tilde{q}\delta^{-1}\right )^{1/4}\\
\ll & \delta^{3/4} \tilde{q}^{3/4+\varepsilon} = (d,|m^{\ast}|)^{3/4} \tilde{q}^{3/4} \mbox{ if } u=0
\end{split}
\end{equation}
and 
\begin{equation} \label{das} 
\begin{split}
& F\left(\overline{l_1}^2k_2^2\tilde{m}^4d^2,3\overline{l_1}^2k_2^2\tilde{m}^4dz,3\overline{l_1}^2k_2^2\tilde{m}^4z^2+u;\tilde{q}\right)\\ \ll & \delta^{1/2} \tilde{q}^{1/2+\varepsilon}
\left(12\overline{l_1}^2k_2^2\tilde{m}^4(d\delta^{-1})^2u,\tilde{q}\delta^{-1}\right )^{1/2}\\  \le &
 \delta^{1/2} \tilde{q}^{1/2+\varepsilon} (u,\tilde{q})^{1/2} \ll (d,|m^{\ast}|)^{1/2} \tilde{q}^{1/2+\varepsilon} \left(u,hd|m^{\ast}|^2\right)^{1/2} \mbox{ if } u\not=0,
 \end{split}
\end{equation}
where we note that $(d\delta^{-1})^2$ and $\tilde{q}\delta^{-1}$ are coprime. 
Putting \eqref{dies} and \eqref{das} together and summing over $z$, we see that the inner-most sum in \eqref{Enew} is bounded by
\begin{equation} \label{EF}
\begin{split}
& \sum\limits_{z=1}^{d} E(a^3b^4h^2m^{\ast}\tilde{m}\overline{l_1}^2,a^3b^2hz\tilde{m}^2\overline{l_1}^2;d) \cdot e\left(\frac{\overline{l_1}^2k_2^2\tilde{m}^4z^3+uz}{\tilde{q}}\right)\times\\ &  F\left(\overline{l_1}^2k_2^2\tilde{m}^4d^2,3\overline{l_1}^2k_2^2\tilde{m}^4d,3\overline{l_1}^2k_2^2\tilde{m}^4z^2+u;\tilde{q}\right) \\
 \ll & d^{1/2+\varepsilon}\tilde{q}^{1/2+\varepsilon}(d,|m^{\ast}|)^{1/2}\left(u, hd(m^{\ast})^2\right)^{1/2}\sum\limits_{z=1}^d (z,d)^{1/2}\\ 
 \le & \left(h^{1/2}d^{3/2}|m^{\ast}|\right)^{1+\varepsilon} (d,|m^{\ast}|)^{1/2} \left(u, hd|m^{\ast}|^2\right)^{1/2} \mbox{ if } u\not=0.
 \end{split}
\end{equation}
Similarly, we deduce
\begin{equation} \label{EF0}
\begin{split}
& \sum\limits_{z=1}^{d} E(a^2b^4h^2m^{\ast}\tilde{m}\overline{l_1}^2,a^3b^2hz\tilde{m}^2\overline{l_1}^2;d) \cdot e\left(\frac{\overline{l_1}^2k_2^2\tilde{m}^4z^3+uz}{\tilde{q}}\right)\times\\ &  F\left(\overline{l_1}^2k_2^2\tilde{m}^4d^2,3\overline{l_1}^2k_2^2\tilde{m}^4d,3\overline{l_1}^2k_2^2\tilde{m}^4z^2;\tilde{q}\right) \\
 \ll & d^{1/2+\varepsilon}\tilde{q}^{3/4+\varepsilon}(d,|m^{\ast}|)^{3/4}\sum\limits_{z=1}^d (z,d)^{1/2}\\ 
 \le & \left(h^{3/4}d^{7/4}|m^{\ast}|^{3/2}\right)^{1+\varepsilon} (d,|m^{\ast}|)^{3/4} \mbox{ if } u=0.
 \end{split}
\end{equation}

\section{Explicit evaluation of exponential sums II}
Now we turn to the key point of this paper, an explicit evaluation of the cubic exponential sum $F(\overline{l_1}^2k_2^2q^2,0,u;\tilde{m}^2)$ appearing in \eqref{Enew}, followed by
an averaging over $l_1$. We write 
\begin{equation} \label{Fev}
\begin{split}
F(\overline{l_1}^2k_2^2q^2,0,u;\tilde{m}^2)= & \sum\limits_{x=1}^{|\tilde{m}|}  \sum\limits_{y=1}^{|\tilde{m}|} 
e\left(\frac{\overline{l_1}^2k_2^2q^2(y\tilde{m}+x)^3+u(y\tilde{m}+x)}{\tilde{m}^2}\right)\\
= &  \sum\limits_{x=1}^{|\tilde{m}|}  
e\left(\frac{\overline{l_1}^2k_2^2q^2x^3+ux}{\tilde{m}^2}\right)  \sum\limits_{y=1}^{|\tilde{m}|} 
e\left(\frac{3\overline{l_1}^2k_2^2q^2x^2+u}{\tilde{m}}\cdot y\right) \\
= & |\tilde{m}| \sum\limits_{\substack{x=1\\ 3\overline{l_1}^2k_2^2q^2x^2+u \equiv 0 \bmod{|\tilde{m}|}}}^{|\tilde{m}|}
e\left(\frac{\overline{l_1}^2k_2^2q^2x^3+ux}{\tilde{m}^2}\right) \\
= & |\tilde{m}| \sum\limits_{\substack{x=1\\ x^2\equiv -\overline{3}u (\overline{k_2q}l_1)^2 \bmod{|\tilde{m}|}}}^{|\tilde{m}|}
e\left(\frac{((k_2q\overline{l_1})^2x^2+u)x}{\tilde{m}^2}\right)\\
= & |\tilde{m}| \sum\limits_{\substack{x_1=1\\ x_1^2\equiv -\overline{3}u \bmod{|\tilde{m}|}}}^{|\tilde{m}|}
e\left(\frac{((k_2q\overline{l_1})^2(\overline{k_2q}l_1x_1)^2+u)\overline{k_2q}l_1x_1}{\tilde{m}^2}\right)\\
= & |\tilde{m}| \sum\limits_{\substack{x_1=1\\ x_1^2\equiv -\overline{3}u \bmod{|\tilde{m}|}}}^{|\tilde{m}|}
e\left(\frac{(x_1^3+ux_1)\overline{k_2q}}{\tilde{m}^2}\cdot l_1\right),
\end{split}
\end{equation}
where the multiplicative inverses are modulo $\tilde{m}^2$. 

\section{Asymptotic estimation of exponential integrals}
We also need to estimate asymptotically the Fourier transform $\hat\Psi(z)$ of the function $\Psi(z)$ defined in \eqref{Psidef}.  We have
\begin{equation} \label{Idef}
\begin{split}
\hat\Psi(\alpha)=I(\alpha,\beta):=\int\limits_{-\infty}^{\infty} \Psi\left(z\right) e(-\alpha z)dz =  \int\limits_{-\infty}^{\infty} 
\hat\Gamma\left(z\right) e\left(-\beta z^3-\alpha z \right)dz,
\end{split}
\end{equation}
where $\beta$ is of the form
\begin{equation} \label{betadef}
\beta:=\frac{k_2^2h^2d^2s^4}{X^{12}|m^{\ast}\tilde{m}|^2}
\end{equation}
with $s=l_1$.
We shall be interested in values of $\alpha$ of the form
\begin{equation} \label{alphadef}
\alpha:=\frac{us^2}{X^4k_3^2|m^{\ast}\tilde{m}|^2}.
\end{equation}
In \cite[subsection 3.2.]{BB}, we provided estimates for $I(\alpha,\beta)$ for the special case when $\hat\Gamma(z)$ is a Gaussian, i.e.
$$
\hat\Gamma(z)=e^{-\pi z^2}.
$$
These results can be carried over to any Schwartz class function with minor modifications. We prove the following general estimates, similar to those in \cite[subsection 3.2.]{BB}, using standard estimates for exponential integrals.

\begin{lemma} \label{expintev} 
Suppose that 
$\alpha$ and $\beta>0$ are real numbers. Set 
$$
\delta(\alpha)=\begin{cases} 1 & \mbox{ if } \alpha<0,\\ 0 & \mbox{ if } \alpha\ge 0 \end{cases}
$$
and 
\begin{equation}
\begin{split}
G(\alpha,\beta):= & \delta(\alpha) \cdot \left(
\hat\Gamma\left(\frac{|\alpha|^{1/2}}{(3\beta)^{1/2}}\right) \cdot \frac{1}{2^{1/2}(3|\alpha|\beta)^{1/4}} \cdot e\left(\frac{1}{8}-\frac{2|\alpha|^{3/2}}{3^{3/2}\beta^{1/2}}\right) + 
\right.\\ & \left. \hat\Gamma\left(-\frac{|\alpha|^{1/2}}{(3\beta)^{1/2}}\right) \cdot \frac{1}{2^{1/2}(3|\alpha|\beta)^{1/4}} \cdot e\left(-\frac{1}{8}+\frac{2|\alpha|^{3/2}}{3^{3/2}\beta^{1/2}}\right)\right) 
\end{split}
\end{equation}
and define the function $I(\alpha,\beta)$ as in \eqref{Idef}. Let $\Delta>1$, $C>0$ and suppose that $\beta\ge \Delta^{-1}$. Then we have the estimates
\begin{eqnarray} \label{easy}
I(\alpha,\beta) & = & O(1)  \mbox{ if } \alpha=0,\\
\label{station}
I(\alpha,\beta) & = & G(\alpha,\beta)+ O\left(\Delta\log(2+\beta)\cdot |\alpha|^{-1}\right) \mbox{ if } 0<|\alpha|\le \Delta^2 \beta, \\
\label{intparts}
I(\alpha,\beta) & = & O\left(\Delta^{-C}\right) \mbox{ if }  \Delta^2\beta < \alpha.
\end{eqnarray}
\end{lemma}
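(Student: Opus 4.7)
The three estimates correspond to three regimes of the parameter $\alpha$, and we handle them in turn. For \eqref{easy}, the Schwartz decay of $\hat\Gamma$ immediately gives $|I(0,\beta)| \le \|\hat\Gamma\|_{L^1(\mathbb{R})} = O(1)$, uniformly in $\beta$. For \eqref{intparts}, the phase $\phi(z) = -\beta z^3 - \alpha z$ has derivative $\phi'(z) = -(3\beta z^2 + \alpha)$ of absolute value at least $\alpha > \Delta^2\beta > \Delta$ (using $\beta \ge \Delta^{-1}$); thus there are no stationary points on the real line, and repeated integration by parts via the operator $L^\ast g = -(2\pi i)^{-1}(g/\phi')'$ gains a factor $\alpha^{-1} \le \Delta^{-1}$ per step. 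After $\lceil C\rceil$ iterations, the rapid decay of $\hat\Gamma$ and all its derivatives controls the remaining integral, and we obtain $I(\alpha,\beta) = O(\Delta^{-C})$.

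For \eqref{station} with $\alpha \ge 0$, the same integration-by-parts argument applied once yields $|I(\alpha,\beta)| \ll |\alpha|^{-1}$, which matches the claim since $G(\alpha,\beta) = 0$ there. The substantive case is $\alpha < 0$, where $\phi$ has stationary points $z_\pm = \pm(|\alpha|/(3\beta))^{1/2}$ satisfying $\phi''(z_\pm) = \mp 2(3|\alpha|\beta)^{1/2}$ and $\phi(z_\pm) = \pm 2|\alpha|^{3/2}/(3^{3/2}\beta^{1/2})$. We introduce a smooth partition of unity $1 = \chi_+ + \chi_- + \chi_0$ localizing around $z_+$, around $z_-$, and on the complement. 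On $\mathrm{supp}(\chi_0)$, $|\phi'(z)|$ stays bounded below by a positive multiple of $|\alpha| + \beta z^2$, and integration by parts against the rapidly decaying $\hat\Gamma$ contributes $O(\log(2+\beta)/|\alpha|)$. Near each $z_\pm$ we exploit the identity $\phi(z_\pm + u) = \phi(z_\pm) + \tfrac{1}{2}\phi''(z_\pm) u^2 - \beta u^3$, which is exact because $\phi$ is a cubic. Rescaling $u = v (3|\alpha|\beta)^{-1/4}$ converts the localized integral into a Fresnel integral perturbed by the cubic term $\beta u^3$; evaluating the leading Fresnel contribution via $\int e(\pm v^2)\,dv = e(\pm 1/8)/\sqrt{2}$ produces the main term $G(\alpha,\beta)$, while Taylor expansion of $\hat\Gamma(z_\pm + u)$ and the cubic perturbation supply the remaining error.

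The main obstacle is the bookkeeping of widths and constants in the stationary-phase step: the cutoffs $\chi_\pm$ must be narrow enough that the cubic correction $\beta u^3$ remains small compared to the quadratic term throughout their supports, yet wide enough to keep the complement contribution at $O(\Delta \log(2+\beta)/|\alpha|)$. The factor $\Delta$ enters through the hypothesis $\beta \ge \Delta^{-1}$, which calibrates how far the oscillations of $e(-\beta z^3)$ extend before Schwartz decay of $\hat\Gamma$ takes over in the complementary region. This analysis mirrors the treatment of the Gaussian case in \cite[Section 3.2]{BB}; the modifications for general Schwartz $\hat\Gamma$ are routine, relying only on uniform bounds for $\hat\Gamma$ and its derivatives.
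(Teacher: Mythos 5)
Your proposal is correct and follows essentially the same route as the paper: a trivial $L^1$ bound at $\alpha=0$, repeated integration by parts (gaining $\Delta^{-1}$ per step via $|\phi'|\ge\alpha>\Delta^2\beta\ge\Delta$) in the non-stationary range, a single first-derivative estimate for $\alpha>0$, and classical stationary phase at $z_\pm=\pm(|\alpha|/(3\beta))^{1/2}$ for $\alpha<0$. The only difference is presentational — you carry out the stationary-phase evaluation by hand with a partition of unity and a Fresnel rescaling, whereas the paper truncates sharply to the intervals $[\pm a,\pm b]$ and cites \cite[Cor.~8.15, Lemma~8.10]{IK} — and your computed values of $\phi(z_\pm)$ and $\phi''(z_\pm)$ are consistent with the stated main term $G(\alpha,\beta)$.
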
 

\begin{proof} The estimate \eqref{easy} follows by bounding the integral in question trivially by
\begin{equation} \label{trivial}
I(\alpha,\beta)\ll \int\limits_{-\infty}^{\infty} \left|\hat\Gamma(z)\right|\ dz = O\left(1\right),
\end{equation} 
and \eqref{intparts} follows by iterated integration by parts, saving a factor of size $\gg \Delta$ in each step. If $\alpha>0$, then $G(\alpha,\beta)=0$ in \eqref{station}, and the estimate in \eqref{station} 
follows from \cite[Lemma 8.10]{IK} for $k=1$ using integration by parts upon noting that 
$$
\left|\frac{d}{dz}\left(-\beta z^3 -\alpha z\right) \right| = 3\beta z^2+\alpha\ge \alpha.
$$
If $\alpha<0$, then we are in the stationary phase case with stationary points
$$
x_0:=\pm\frac{|\alpha|^{1/2}}{(3\beta)^{1/2}}.
$$ 
Set 
$$
a:=\frac{|\alpha|^{1/2}}{2\beta^{1/2}} \quad \mbox{ and } \quad b:= \frac{|\alpha|^{1/2}}{\beta^{1/2}}.
$$
We first deal with the partial integral over the intervall $[a,b]$.
Employing \cite[Corollary 8.15]{IK} together with \cite[Lemma 8.10]{IK} for $k=2$, and using integration by parts, we asymptotically evaluate this integral as 
\begin{equation} \label{>}
\begin{split}
& \int\limits_{a}^{b} \hat\Gamma(z) e\left(-\beta z^3-\alpha z\right) dz -  \hat\Gamma\left(\frac{|\alpha|^{1/2}}{(3\beta)^{1/2}}\right) \cdot \frac{1}{2^{1/2}(3|\alpha|\beta)^{1/4}} \cdot e\left(\frac{1}{8}-\frac{2|\alpha|^{3/2}}{3^{3/2}\beta^{1/2}}\right)\\
= & O\left(\log(2+\beta)\cdot (|\alpha|\beta)^{-1/2}\right) = O\left(\Delta\log(2+\beta)\cdot |\alpha|^{-1}\right),
\end{split}
\end{equation}
where we recall that $|\alpha|\le \Delta^2 \beta$. Similarly, we find
\begin{equation} \label{<}
\begin{split}
& \int\limits_{-b}^{-a} \hat\Gamma(z) e\left(-\beta z^3-\alpha z\right) \ dz - \hat\Gamma\left(-\frac{|\alpha|^{1/2}}{(3\beta)^{1/2}}\right) \cdot \frac{1}{2^{1/2}(3|\alpha|\beta)^{1/4}} \cdot e\left(-\frac{1}{8}+\frac{2|\alpha|^{3/2}}{3^{3/2}\beta^{1/2}}\right) \\
= & O\left(\Delta\log(2+\beta)\cdot  |\alpha|^{-1}\right).
\end{split}
\end{equation}
Using \eqref{>} and \eqref{<}, and estimating the remaining integrals over $(-\infty,-b)$, $(-a,a)$ and $(b,\infty)$ again using \cite[Lemma 8.10]{IK} for $k=1$, we obtain \eqref{station}.
\end{proof}

Since we shall apply partial summation over $l_1$, we shall also need the following asymptotic evaluation for 
\begin{equation} \label{I1alphabeta}
\frac{\partial}{\partial s} I(\alpha,\beta) = I_1(\alpha,\beta):=2\pi i \int\limits_{-\infty}^{\infty} 
\hat\Gamma\left(z\right)\cdot \left(-\frac{4\beta}{s} \cdot z^3-\frac{2\alpha}{s}\cdot z\right)\cdot e\left(-\beta z^3-\alpha z \right)dz,
\end{equation}
with $\alpha$ and $\beta$ being defined as in \eqref{betadef} and \eqref{alphadef}. The following result can be proved in a similar way as Lemma \ref{expintev}, where 
$$
\hat\Gamma\left(z\right)\cdot \left(-\frac{4\beta}{s} \cdot z^3-\frac{2\alpha}{s}\cdot z\right)
$$
now takes the rule of $\hat\Gamma(z)$.

\begin{lemma} \label{expintev1} 
Suppose that 
$\alpha$ and $\beta>0$ are real numbers. Set 
$$
\delta(\alpha)=\begin{cases} 1 & \mbox{ if } \alpha<0,\\ 0 & \mbox{ if } \alpha\ge 0 \end{cases}
$$
and 
\begin{equation}
\begin{split}
G_1(\alpha,\beta):= & \delta(\alpha) \cdot \left(
-\hat\Gamma\left(\frac{|\alpha|^{1/2}}{(3\beta)^{1/2}}\right) \cdot \frac{2^{1/2}|\alpha|^{5/4}}{3^{7/4}\beta^{3/4}s}
 \cdot e\left(\frac{1}{8}-\frac{2|\alpha|^{3/2}}{3^{3/2}\beta^{1/2}}\right) +
\right.\\ & \left. \hat\Gamma\left(-\frac{|\alpha|^{1/2}}{(3\beta)^{1/2}}\right) \cdot \frac{2^{1/2}|\alpha|^{5/4}}{3^{7/4}\beta^{3/4}s} \cdot e\left(-\frac{1}{8}+\frac{2|\alpha|^{3/2}}{3^{3/2}\beta^{1/2}}\right)\right) 
\end{split}
\end{equation}
and define the function $I_1(\alpha,\beta)$ as in \eqref{Idef}. Let $\Delta>1$, $C>0$ and suppose that $\beta\ge \Delta^{-1}$. Then we have the estimates
\begin{eqnarray*} 
I_1(\alpha,\beta) & = & G_1(\alpha,\beta)+ O\left(\Delta\log(2+\beta)\cdot s^{-1}\right) \mbox{ if } 0<|\alpha|\le \Delta^2 \beta, \\
I_1(\alpha,\beta) & = & O\left(\Delta^{-C}\right) \mbox{ if }  \Delta^2\beta < \alpha.
\end{eqnarray*}
\end{lemma}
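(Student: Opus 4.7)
The plan is to mimic the proof of Lemma \ref{expintev} essentially verbatim, with the Schwartz function $\hat\Gamma(z)$ replaced by the new amplitude
$$
\mathcal{A}(z) := 2\pi i\,\hat\Gamma(z)\Bigl(-\tfrac{4\beta}{s}z^{3}-\tfrac{2\alpha}{s}z\Bigr),
$$
which is still of Schwartz class because it is a polynomial times a Schwartz function. Critically, the oscillatory phase $-\beta z^{3}-\alpha z$ is unchanged, so the stationary-phase geometry is identical to that in Lemma \ref{expintev}: there are no real stationary points when $\alpha\ge 0$, while for $\alpha<0$ the stationary points are $z_{0}^{\pm}=\pm|\alpha|^{1/2}/(3\beta)^{1/2}$.

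In the non-stationary regime $\Delta^{2}\beta<\alpha$, I would iterate integration by parts exactly as in the proof of \eqref{intparts}, each step saving a factor $\gg\Delta$ since $|(-\beta z^{3}-\alpha z)'|\ge\alpha$; the extra Schwartz amplitude is harmless. In the stationary regime $0<|\alpha|\le\Delta^{2}\beta$ (so $\alpha<0$), I would split the integral at $a=|\alpha|^{1/2}/(2\beta^{1/2})$ and $b=|\alpha|^{1/2}/\beta^{1/2}$ exactly as in the proof of Lemma \ref{expintev}. On the tail intervals $(-\infty,-b)$, $(-a,a)$, $(b,\infty)$, \cite[Lemma 8.10]{IK} with $k=1$ produces the error term $O(\Delta\log(2+\beta)\cdot s^{-1})$, the $s^{-1}$ now coming directly out of $\mathcal{A}$. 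On the stationary intervals $[a,b]$ and $[-b,-a]$, \cite[Corollary 8.15]{IK} combined with \cite[Lemma 8.10]{IK} for $k=2$ extracts the leading term together with the same $O(\Delta\log(2+\beta)\cdot s^{-1})$ error.

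The only genuinely new computation is the evaluation of $\mathcal{A}$ at the stationary points to identify the main term $G_{1}$. Substituting $z_{0}^{\pm}=\pm(|\alpha|/(3\beta))^{1/2}$ into the polynomial factor and using $\alpha=-|\alpha|$, a short calculation gives
$$
-\tfrac{4\beta}{s}(z_{0}^{\pm})^{3}-\tfrac{2\alpha}{s}z_{0}^{\pm}=\pm\frac{2|\alpha|^{3/2}}{3^{3/2}\beta^{1/2}s}.
$$
Multiplying by $2\pi i$, by $\hat\Gamma(z_{0}^{\pm})$, and by the usual stationary-phase prefactor $1/(2^{1/2}(3|\alpha|\beta)^{1/4})$, and absorbing $i=e(1/4)$ into the phase exponent, produces the amplitude $\tfrac{2^{1/2}|\alpha|^{5/4}}{3^{7/4}\beta^{3/4}s}$ with the signs $\mp$ that appear in the definition of $G_{1}$.

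The main obstacle is purely bookkeeping: tracking the $i$-factor and the signs of $(z_{0}^{\pm})^{3}$ and verifying that the stationary-phase error in \cite[Corollary 8.15]{IK}, which involves a derivative of the amplitude, is absorbed into $O(\Delta\log(2+\beta)\cdot s^{-1})$ after the substitution $\hat\Gamma\leadsto\mathcal{A}$. Since the derivative of $-4\beta z^{3}/s-2\alpha z/s$ contributes at most $\beta z^{2}/s+\alpha/s\ll\alpha/s$ on $[a,b]$, which is $\ll\Delta^{2}\beta/s$ under our hypothesis $|\alpha|\le\Delta^{2}\beta$, this bookkeeping is routine and requires no new idea beyond those already present in the proof of Lemma \ref{expintev}.
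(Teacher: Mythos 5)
Your proposal matches the paper exactly: the paper offers no separate proof of this lemma, stating only that it can be proved in the same way as Lemma \ref{expintev} with $\hat\Gamma(z)\cdot\left(-\frac{4\beta}{s}z^3-\frac{2\alpha}{s}z\right)$ taking the role of $\hat\Gamma(z)$, which is precisely your strategy, and your evaluation of the new amplitude at the stationary points correctly produces the factor $2^{1/2}|\alpha|^{5/4}/(3^{7/4}\beta^{3/4}s)$. The only loose point is the claim that the $\mp$ signs arise from absorbing $i=e(1/4)$ into the phase --- a common factor cannot give opposite signs at the two stationary points; they come instead from the polynomial amplitude taking the opposite values $\pm 2|\alpha|^{3/2}/(3^{3/2}\beta^{1/2}s)$ at $z=\pm|\alpha|^{1/2}/(3\beta)^{1/2}$, consistent with $G_1$ being the leading term of $\frac{\partial}{\partial s}G$ as the paper notes --- but this is sign bookkeeping that does not affect the argument or its use downstream.
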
 

We note that for $\alpha$ and $\beta$ as in \eqref{betadef} and \eqref{alphadef}, we have
\begin{equation} \label{G}
\begin{split}
& G(\alpha,\beta)= \delta(u) \times\\ & \left(\hat\Gamma\left(\frac{X^4|u|^{1/2}}{3^{1/2}k_2k_3hds}\right) \cdot \frac{X^4k_3^{1/2}|m^{\ast}\tilde{m}|}{12^{1/4}(k_2hd)^{1/2}|u|^{1/4}s^{3/2}} \cdot e\left(\frac{1}{8}-\frac{2|u|^{3/2}s}{3^{3/2}k_3^3k_2hd|m^{\ast}\tilde{m}|^2}\right) + 
\right.\\ & \left.\hat\Gamma\left(-\frac{X^4|u|^{1/2}}{3^{1/2}k_2k_3hds}\right) \cdot \frac{2^{1/2}X^4k_3^{1/2}|m^{\ast}\tilde{m}|}{12^{1/4}(k_2hd)^{1/2}|u|^{1/4}s^{3/2}} \cdot e\left(-\frac{1}{8}+\frac{2|u|^{3/2}s}{3^{3/2}k_3^3k_2hd|m^{\ast}\tilde{m}|^2}\right)\right)
\end{split}
\end{equation}
and 
\begin{equation} \label{G1}
\begin{split}
& G_1(\alpha,\beta)= 2\pi i \cdot \delta(u) \times\\ & \left(-\hat\Gamma\left(\frac{X^4|u|^{1/2}}{3^{1/2}k_2k_3hds}\right) \cdot \frac{2^{1/2}X^4|u|^{5/4}}{3^{7/4}(k_2hd)^{3/2}k_3^{5/2}
|m^{\ast}\tilde{m}|s^{3/2}}
\cdot e\left(\frac{1}{8}-\frac{2|u|^{3/2}s}{3^{3/2}k_3^3k_2hd|m^{\ast}\tilde{m}|^2}\right) +
\right.\\ & \left.\hat\Gamma\left(-\frac{X^4|u|^{1/2}}{3^{1/2}k_2k_3hds}\right) \cdot \frac{2^{1/2}X^4|u|^{5/4}}{3^{7/4}(k_2hd)^{3/2}k_3^{5/2}
|m^{\ast}\tilde{m}|s^{3/2}} \cdot e\left(-\frac{1}{8}+\frac{2|u|^{3/2}s}{3^{3/2}k_3^3k_2hd|m^{\ast}\tilde{m}|^2}\right)\right).
\end{split}
\end{equation}
Further, we observe that
\begin{equation}
\frac{\partial}{\partial s} G(\alpha,\beta)= G_1(\alpha,\beta)+O\left(\frac{X^4|m^{\ast}\tilde{m}|}{(hd)^{1/2}|u|^{1/4}s^{5/2}}+\frac{X^8|u|^{1/4}|m^{\ast}\tilde{m}|}{(hd)^{3/2}s^{7/2}}\right).
\end{equation}

Now we suppose that 
\begin{equation} \label{holds}
s\ge \left(\frac{|m^{\ast}\tilde{m}}{k_2hd}\right)^{1/2}X^{3-\varepsilon}
\end{equation}
so that $\beta$, as specified in \eqref{betadef}, satisfies $\beta\ge X^{-\varepsilon}$. Further, we set 
$$
K:=\frac{(k_2k_3hds)^2}{X^{8-2\varepsilon}}
$$
and define
\begin{equation} \label{omegadef}
\Omega(\alpha,\beta)=I(\alpha,\beta)-G(\alpha,\beta).
\end{equation}
Then, for $\alpha$ and $\beta$ as in \eqref{betadef} and \eqref{alphadef}, it follows that
\begin{equation} \label{Omega}
\Omega(\alpha,\beta)
= \begin{cases} O(1) & \mbox{ if } \alpha=u=0, \\ 
O\left(X^{4+2\varepsilon}|m^{\ast}\tilde{m}|^2|u|^{-1}s^{-2}\right) & \mbox{ if } 0<|u|\le K, \\
O\left(X^{-C}\right) &  \mbox{ if } |u| > K,
\end{cases}
\end{equation}
\begin{equation} \label{Omegaprime}
\frac{\partial}{\partial s} \Omega(\alpha,\beta)
= \begin{cases} 
O(X^{2\varepsilon}(s^{-1}+X^4|m^{\ast}\tilde{m}|(hd)^{-1/2}|u|^{-1/4}s^{-5/2}\\
 +X^8|u|^{1/4}|m^{\ast}\tilde{m}|(hd)^{-3/2}s^{-7/2})) & \mbox{ if } 0<|u|\le K, \\
O_{\hat\Gamma,C}\left(X^{-C}\right) & \mbox{ if } |u| > K.
\end{cases}
\end{equation}
We also note that
\begin{equation} \label{Gest}
G(\alpha,\beta)=O\left(X^{-C}\right) \mbox{ if } |u| > K
\end{equation}
by rapid decay of $\hat\Gamma$. The last three estimates and equations are the results on exponential integrals we shall work with in the following.

\section{Rearranging summations and partial estimation of $E^{-}(X)$}
Now we pull in the sum over $l_1$ in \eqref{Enew}, getting
\begin{equation} \label{Enew1}
\begin{split}
E^-(X) = & X^{6} \sum\limits_{k_2|2} \sum\limits_{k_3|3} \mathop{\sum\limits_{\substack{h\le \xi/(k_2k_3) \\ (h,6)=1}} \sum\limits_{\substack{d\le \xi/(k_2k_3h)\\ (d,6h)=1}}}_{hd\le \xi^{\kappa}} \frac {\mu(k_2)}{k_2}\cdot \frac{\mu(k_3)}{k_3^3}\cdot \frac{\mu(h)}{h^2}\cdot \frac{\mu(d)}{d^3} \times\\ & 
\sum\limits_{\substack{m^{\ast}\in \mathbb{Z}\setminus\{0\}\\ \mbox{\tiny rad}(m^{\ast})|6hd}} \frac{1}{(m^{\ast})^2} \cdot
\sum\limits_{\substack{\tilde{m} \in \mathbb{Z}\setminus\{0\}\\ (\tilde{m},6hd)=1}} \frac{1}{\tilde{m}^2} \cdot 
\sum\limits_{u\in \mathbb{Z}}   \sum\limits_{z=1}^{d} \\ & \sum\limits_{\substack{ l_1 \le \xi/(k_2k_3hd) \\ (l_1,6hdm^{\ast}\tilde{m})=1}} \frac{\mu(l_1)}{l_1} \cdot \hat\Gamma\left(\frac{X^6m^{\ast}\tilde{m}}{k_2hdl_1^2}\right) \cdot   \hat\Psi\left(\frac{l_1^2u}{X^4k_3^2|m^{\ast}\tilde{m}|^2}\right) \times\\ & 
E(a^3b^4h^2m^{\ast}\tilde{m}\overline{l_1}^2,a^3b^2hz\tilde{m}^2\overline{l_1}^2;d) \cdot e\left(\frac{\overline{l_1}^2k_2^2\tilde{m}^4z^3+uz}{\tilde{q}}\right) \times\\ &
F\left(\overline{l_1}^2k_2^2\tilde{m}^4d^2,3\overline{l_1}^2k_2^2\tilde{m}^4dz,3\overline{l_1}^2k_2^2\tilde{m}^4z^2+u;\tilde{q}\right)
\cdot F(\overline{l_1}^2k_2^2q^2,0,u;\tilde{m}^2).
\end{split}
\end{equation}

Next, upon recalling \eqref{Idef} and \eqref{omegadef}, we split the function $\hat\Psi$ (with $s=l_1$) into
$$
\hat\Psi(\alpha)=\Omega(\alpha,\beta)+G(\alpha,\beta)
$$
and accordingly $E^-(X)$ into
\begin{equation} \label{E-split}
E^-(X)=E_{\Omega}(X)+E_{G}(X).
\end{equation}
Further, we cut summations, at the cost of errors of size $O(1)$, taking into account that $\hat\Gamma$, $\Omega$ and $G$ have rapid decay, which, in the case of $\Omega$, follows from the last case in \eqref{Omega}, and in the case of $G$ follows from \eqref{Gest}.   We set
$$
M:=\frac{\xi^2}{hdX^{6-\varepsilon}}, \quad U:=\frac{\xi^2}{X^{8-2\varepsilon}}, \quad L:=\max\left\{\left(\frac{|m^{\ast}\tilde{m}|}{k_2hd}\right)^{1/2}X^{3-\varepsilon}, 
\frac{|u|^{1/2}X^{4-\varepsilon}}{k_2k_3hd}\right\}, \quad 
\tilde{L}:=\frac{\xi}{k_2k_3hd}.
$$
Then
\begin{equation*} 
\begin{split}
E_{f}(X) \ll & 1+ X^{6} \sum\limits_{k_2|2} \sum\limits_{k_3|3} \mathop{\sum\limits_{\substack{h\le \xi/(k_2k_3) \\ (h,6)=1}} \sum\limits_{\substack{d\le \xi/(k_2k_3h)\\ (d,6h)=1}}}_{hd\le \xi^{\kappa}} \frac {\mu(k_2)}{k_2}\cdot \frac{\mu(k_3)}{k_3^3}\cdot \frac{\mu(h)}{h^2}\cdot \frac{\mu(d)}{d^3} \times\\ & 
\mathop{\sum\limits_{\substack{m^{\ast}\in \mathbb{Z}\setminus\{0\}\\ \mbox{\tiny rad}(m^{\ast})|6hd}}
\sum\limits_{\substack{\tilde{m} \in \mathbb{Z}\setminus\{0\}\\ (\tilde{m},6hd)=1}}}_{|m^{\ast}\tilde{m}|\le M} \frac{1}{\left(m^{\ast}\tilde{m}\right)^2} \cdot 
\sum\limits_{|u|\le U}   \sum\limits_{z=1}^{d} \\ & \sum\limits_{\substack{L< l_1 \le \tilde{L} \\ (l_1,6hdm^{\ast}\tilde{m})=1}} \frac{\mu(l_1)}{l_1} \cdot \hat\Gamma\left(\frac{X^6m^{\ast}\tilde{m}}{k_2hdl_1^2}\right) \cdot   f\left(\frac{l_1^2u}{X^4k_3^2|m^{\ast}\tilde{m}|^2},\frac{k_2^2h^2d^2l_1^4}{X^{12}|m^{\ast}\tilde{m}|^2}\right) \times\\ & 
E(a^3b^4h^2m^{\ast}\tilde{m}\overline{l_1}^2,a^3b^2hz\tilde{m}^2\overline{l_1}^2;d) \cdot e\left(\frac{\overline{l_1}^2k_2^2\tilde{m}^4z^3+uz}{\tilde{q}}\right) \times\\ &
F\left(\overline{l_1}^2k_2^2\tilde{m}^4d^2,3\overline{l_1}^2k_2^2\tilde{m}^4dz,3\overline{l_1}^2k_2^2\tilde{m}^4z^2+u;\tilde{q}\right)
\cdot F(\overline{l_1}^2k_2^2q^2,0,u;\tilde{m}^2),
\end{split}
\end{equation*}
for $f=\Omega,G$.
Breaking the summation over $l_1$ into residue classes modulo 
$$
q_1=[d,\tilde{q}]=\frac{d\tilde{q}}{(d,\tilde{q})}=\frac{q}{(d,|m^{\ast}|)}
$$
(recall that $q=\tilde{q}d$ and $q=k_3^3hd(m^{\ast})^2$), we get
\begin{equation} \label{Enew2}
\begin{split}
E_f(X) = & 1+ X^{6} \sum\limits_{k_2|2} \sum\limits_{k_3|3} \mathop{\sum\limits_{\substack{h\le \xi/(k_2k_3) \\ (h,6)=1}} \sum\limits_{\substack{d\le \xi/(k_2k_3h)\\ (d,6h)=1}}}_{hd\le \xi^{\kappa}} \frac {\mu(k_2)}{k_2}\cdot \frac{\mu(k_3)}{k_3^3}\cdot \frac{\mu(h)}{h^2}\cdot \frac{\mu(d)}{d^3} \times\\ & 
\mathop{\sum\limits_{\substack{m^{\ast}\in \mathbb{Z}\setminus\{0\}\\ \mbox{\tiny rad}(m^{\ast})|6hd}}
\sum\limits_{\substack{\tilde{m} \in \mathbb{Z}\setminus\{0\}\\ (\tilde{m},6hd)=1}}}_{|m^{\ast}\tilde{m}|\le M} \frac{1}{\left(m^{\ast}\tilde{m}\right)^2} \cdot 
\sum\limits_{|u|\le U}   \sum\limits_{\substack{j=1\\ (j,q_1)=1}}^{q_1}\\ & \sum\limits_{z=1}^{d}  E(a^3b^4h^2m^{\ast}\tilde{m}\overline{j}^2,a^3b^2hz\tilde{m}^2\overline{j}^2;d) \cdot
e\left(\frac{\overline{j}^2k_2^2\tilde{m}^4z^3+uz}{\tilde{q}}\right) \times\\ &
F\left(\overline{j}^2k_2^2\tilde{m}^4d^2,3\overline{j}^2k_2^2\tilde{m}^4dz,3\overline{j}^2k_2^2\tilde{m}^4z^2+u;\tilde{q}\right)  \times\\ &
\sum\limits_{\substack{L< l_1 \le \tilde{L} \\ (l_1,6\tilde{m})=1\\ l_1\equiv j \bmod{q_1}}} \frac{\mu(l_1)}{l_1} \cdot \hat\Gamma\left(\frac{X^6m^{\ast}\tilde{m}}{k_2hdl_1^2}\right) \times\\ &   f\left(\frac{l_1^2u}{X^4k_3^2|m^{\ast}\tilde{m}|^2},\frac{k_2^2h^2d^2l_1^4}{X^{12}|m^{\ast}\tilde{m}|^2}\right) 
\cdot F(\overline{l_1}^2k_2^2q^2,0,u;\tilde{m}^2).
\end{split}
\end{equation}

Now we bound the contribution $R_f(X)$ of $u=0$ to the right-hand side of \eqref{Enew2}. By definition of $G$, we have $R_{G}(X)=0$, and thus we are left with bounding 
$R_{\Omega}(X)$. Using \eqref{EF0} and \eqref{Fev} with $j$ in place of $l_1$, $\hat\Gamma(z)=O(1)$ and $\Omega(0)=O(1)$, which latter is the bound in the first case in \eqref{Omega}, and estimating the sum over $l_1$ trivially using $|\mu(l_1)|\le 1$, we get
\begin{equation} \label{ROmegaest}
R_{\Omega}(X) \ll X^{6+\varepsilon} \mathop{\sum\limits \sum\limits}_{hd\le \xi^{\kappa}} \frac{1}{(hd)^{5/4}}
\mathop{\sum\limits_{\substack{m^{\ast}\in \mathbb{Z}\setminus\{0\}\\ \mbox{\tiny rad}(m^{\ast})|6hd}}
\sum\limits_{\substack{\tilde{m} \in \mathbb{Z}\setminus\{0\}\\ (\tilde{m},6hd)=1}}}_{|m^{\ast}\tilde{m}|\le M} \frac{(d,|m^{\ast}|)^{3/4}}{|m^{\ast}|^{1/2}|\tilde{m}|}  \sum\limits_{\substack{x_1=1\\ x_1^2\equiv 0 \bmod{|\tilde{m}|}}}^{|\tilde{m}|} 1.
\end{equation}
The inner-most sum can be evaluated explicitly. If $s(\tilde{m})$ is the largest square dividing $\tilde{m}$, then
$$
\sum\limits_{\substack{x_1=1\\ x_1^2\equiv 0 \bmod{|\tilde{m}|}}}^{|\tilde{m}|} 1 = \sqrt{s({\tilde{m}})}.
$$
Now the sum over $\tilde{m}$ in \eqref{ROmegaest} can be estimated by
$$
\sum\limits_{\substack{\tilde{m} \in \mathbb{Z}\setminus\{0\}\\ (\tilde{m},6hd)=1\\|\tilde{m}|\le M/|m^{\ast}|}} \frac{\sqrt{s({\tilde{m}})}}{|\tilde{m}|}  \ll
\sum\limits_{1\le r\le M/|m^{\ast}|} \sqrt{r^2} \sum\limits_{1\le s\le M/(r^2|m^{\ast}|)} \frac{1}{r^2s} \ll X^{\varepsilon}.
$$
Further,
$$
\sum\limits_{\substack{m^{\ast}\in \mathbb{Z}\setminus \{0\}\\ |m^{\ast}|\le M\\ \mbox{\tiny rad}(m^{\ast})|6hd}} \frac{(d,|m^{\ast}|)^{3/4}}{|m^{\ast}|^{1/2}} \ll d^{1/4} \sum\limits_{\substack{m^{\ast}\in \mathbb{Z}\setminus \{0\}\\ |m^{\ast}|\le M\\ \mbox{\tiny rad}(m^{\ast})|6hd}} 1 \ll d^{1/4}X^{\varepsilon}.
$$
Hence,
$$
R_{\Omega}(X) \ll X^{6+\varepsilon} \mathop{\sum\limits \sum\limits}_{hd\le \xi^{\kappa}} \frac{1}{h^{5/4}d} \ll X^{6+\varepsilon}. 
$$
By a short calculation, it follows that $R(x)\ll X^{6+\varepsilon}$ and hence, we deduce from \eqref{EF}, \eqref{Fev} and \eqref{Enew2} that
\begin{equation} \label{Enew3}
\begin{split}
E_f(X) \ll & X^{6+\varepsilon} + X^{6+\varepsilon} \sum\limits_{k_2|2} \sum\limits_{k_3|3} \mathop{\sum\limits\sum\limits}_{hd\le \xi^{\kappa}} \frac{1}{(hd)^{3/2}} \cdot 
\mathop{\sum\limits_{\substack{m^{\ast}\in \mathbb{Z}\setminus\{0\}\\ \mbox{\tiny rad}(m^{\ast})|6hd}}
\sum\limits_{\substack{\tilde{m} \in \mathbb{Z}\setminus\{0\}\\ (\tilde{m},6hd)=1}}}_{|m^{\ast}\tilde{m}|\le M} \frac{(d,|m^{\ast}|)^{1/2}}{|m^{\ast}\tilde{m}|} \times\\ &
\sum\limits_{0<|u|\le U}   \left(u,hd|m^{\ast}|^2\right)^{1/2} \sum\limits_{\substack{x_1=1\\ x_1^2\equiv -\overline{3}u \bmod{|\tilde{m}|}}}^{|\tilde{m}|} \sum\limits_{\substack{j=1\\ (j,q_1)=1}}^{q_1}
 \left|Z_f(j) \right|,
\end{split}
\end{equation}
for $f=\Omega,G$, where 
\begin{equation} \label{Zdef}
\begin{split}
Z_f(j) := &
\sum\limits_{\substack{L< l_1 \le \tilde{L} \\ (l_1,6\tilde{m})=1\\ l_1\equiv j \bmod{q_1}}} \frac{\mu(l_1)}{l_1} \cdot \hat\Gamma\left(\frac{X^6m^{\ast}\tilde{m}}{k_2hdl_1^2}\right) \cdot   f\left(\frac{l_1^2u}{X^4k_3^2|m^{\ast}\tilde{m}|^2},\frac{k_2^2h^2d^2l_1^4}{X^{12}|m^{\ast}\tilde{m}|^2}\right) 
\cdot e\left(\gamma l_1\right),
\end{split}
\end{equation}
with
$$
\gamma:=\frac{(x_1^3+ux_1)\overline{k_2q}}{\tilde{m}^2}.
$$

\section{Partial summation over $l_1$}
In this section, we transform the inner-most sum in \eqref{Enew3}
$$
\sum\limits_{\substack{j=1\\ (j,q_1)=1}}^{q_1}
 \left|Z_f(j) \right|
 $$
by applying partial summation over $l_1$ to $Z_f(j)$. We shall assume that the variables\\ $k_2,k_3,h,d,m^{\ast},\tilde{m},u,x_1$ satisfy the summation conditions in \eqref{Enew}. In particular, \eqref{holds} holds, and we are in the case $0<|u|\le K$ in \eqref{Omega} and \eqref{Omegaprime}.  

We start with the case $f=G$. In this case, we have, by \eqref{G} with $s=l_1$,
\begin{equation} \label{ZGsplit}
Z_G(j)=Z_{G,1}(j)+Z_{G,-1}(j),
\end{equation}
where 
\begin{equation} \label{ZGomega}
\begin{split}
Z_{G,\omega}(j):= & \delta(u) \cdot e\left(\frac{\omega}{8}\right)\cdot \frac{X^4k_3^{1/2}|m^{\ast}\tilde{m}|}{(18k_2hd)^{1/2}|u|^{1/4}}\cdot \sum\limits_{\substack{L< l_1 \le \tilde{L} \\ (l_1,6\tilde{m})=1\\ l_1\equiv j \bmod{q_1}}} \frac{\mu(l_1)}{l_1^{5/2}} \cdot \hat\Gamma\left(\frac{X^6m^{\ast}\tilde{m}}{k_2hdl_1^2}\right) \times\\ &  \hat\Gamma\left(\omega\cdot \frac{X^4|u|^{1/2}}{3^{1/2}k_2k_3hdl_1}\right)  \cdot 
e\left(\gamma_\omega l_1\right)
\end{split}
\end{equation}
with 
$$
\gamma_{\omega}:=\gamma-\omega\cdot \frac{2|u|^{3/2}}{3^{3/2}k_3^3k_2hd|m^{\ast}\tilde{m}|^2}.
$$

Next, we write 
\begin{equation} \label{Sswdef}
\mathcal{S}_j(s,w):= \sum\limits_{\substack{L< n \le s \\ (n,6\tilde{m})=1\\ n \equiv j \bmod{q_1}}} \mu(n)\cdot e\left(w n\right)
\end{equation}
and remove the weight functions on the right-hand side of \eqref{ZGomega} using partial summation, leading to
\begin{equation*} 
\begin{split}
Z_{G,\omega}(j)= & \delta(u) \cdot e\left(\frac{\omega}{8}\right)\cdot \frac{X^4k_3^{1/2}|m^{\ast}\tilde{m}|}{(18k_2hd)^{1/2}|u|^{1/4}} \times\\ & 
\left(\frac{1}{\tilde{L}^{5/2}} \cdot \hat\Gamma\left(\frac{X^6m^{\ast}\tilde{m}}{k_2hd\tilde{L}^2}\right) \cdot 
\hat\Gamma\left(\omega\cdot \frac{X^4|u|^{1/2}}{3^{1/2}k_2k_3hd\tilde{L}}\right) \cdot \mathcal{S}_j(\tilde{L},\gamma_{\omega})+\right. \\ &  
\frac{5}{2} \cdot \int\limits_{L}^{\tilde{L}} \frac{1}{s^{7/2}} \cdot \hat\Gamma\left(\frac{X^6m^{\ast}\tilde{m}}{k_2hds^2}\right) \cdot 
\hat\Gamma\left(\omega\cdot \frac{X^4|u|^{1/2}}{3^{1/2}k_2k_3hds}\right) \cdot \mathcal{S}_j(s,\gamma_{\omega}) \ ds + \\
&  
\frac{2X^6m^{\ast}\tilde{m}}{k_2hd} \cdot \int\limits_{L}^{\tilde{L}} \frac{1}{s^{11/2}} \cdot \hat\Gamma'\left(\frac{X^6m^{\ast}\tilde{m}}{k_2hds^2}\right) \cdot 
\hat\Gamma\left(\omega\cdot \frac{X^4|u|^{1/2}}{3^{1/2}k_2k_3hds}\right) \cdot \mathcal{S}_j(s,\gamma_{\omega}) \ ds + \\ &  \left.
\frac{\omega X^4|u|^{1/2}}{3^{1/2}k_2k_3hd} \cdot \int\limits_{L}^{\tilde{L}} \frac{1}{s^{9/2}} \cdot \hat\Gamma\left(\frac{X^6m^{\ast}\tilde{m}}{k_2hds^2}\right) \cdot 
\hat\Gamma'\left(\omega\cdot \frac{X^4|u|^{1/2}}{3^{1/2}k_2k_3hds}\right) \cdot \mathcal{S}_j(s,\gamma_{\omega}) \ ds \right).
\end{split}
\end{equation*}
Hence, using $\hat\Gamma(z),\hat\Gamma'(z)=O(1)$, it follows that
\begin{equation} \label{ZGomega1}
\begin{split}
& \sum\limits_{\substack{j=1\\ (j,q_1)=1}}^{q_1} |Z_{G,\omega}(j)|\ll
\frac{X^4|m^{\ast}\tilde{m}|}{(hd)^{1/2}|u|^{1/4}\tilde{L}^{5/2}} \cdot 
\sum\limits_{\substack{j=1\\ (j,q_1)=1}}^{q_1} \left|\mathcal{S}_j(\tilde{L},\gamma_{\omega})\right|+ \\ &  
\int\limits_{L}^{\tilde{L}} \left(\frac{X^4|m^{\ast}\tilde{m}|}{(hd)^{1/2}|u|^{1/4}s^{7/2}} + \frac{X^{10}|m^{\ast}\tilde{m}|^2}{(hd)^{3/2}|u|^{1/4}s^{11/2}} + 
\frac{X^8|u|^{1/4}|m^{\ast}\tilde{m}|}{(hd)^{3/2}s^{9/2}} \right) \cdot  \sum\limits_{\substack{j=1\\ (j,q_1)=1}}^{q_1} \left|\mathcal{S}_j(s,\gamma_{\omega})\right|\ ds.
\end{split}
\end{equation}

Now we turn to the case $f=\Omega$. Then using \eqref{Zdef} and partial summation, we get
\begin{equation*} 
\begin{split}
Z_{\Omega}(j)= & 
\left.\frac{1}{\tilde{L}} \cdot \hat\Gamma\left(\frac{X^6m^{\ast}\tilde{m}}{k_2hd\tilde{L}^2}\right) \cdot  
\Omega\left(\frac{u\tilde{L}^2}{X^4k_3^2|m^{\ast}\tilde{m}|^2},\frac{k_2^2h^2d^2\tilde{L}^4}{X^{12}|m^{\ast}\tilde{m}|^2}\right) \mathcal{S}_j(\tilde{L},\gamma)+\right. \\ &  
 \int\limits_{L}^{\tilde{L}} \frac{1}{s^2} \cdot \hat\Gamma\left(\frac{X^6m^{\ast}\tilde{m}}{k_2hds^2}\right) \cdot 
\Omega\left(\frac{us^2}{X^4k_3^2|m^{\ast}\tilde{m}|^2},\frac{k_2^2h^2d^2s^4}{X^{12}|m^{\ast}\tilde{m}|^2}\right)  \cdot \mathcal{S}_j(s,\gamma) \ ds + \\
&  
\frac{2X^6m^{\ast}\tilde{m}}{k_2hd} \cdot \int\limits_{L}^{\tilde{L}} \frac{1}{s^4} \cdot \hat\Gamma'\left(\frac{X^6m^{\ast}\tilde{m}}{k_2hds^2}\right) \cdot 
\Omega\left(\frac{us^2}{X^4k_3^2|m^{\ast}\tilde{m}|^2},\frac{k_2^2h^2d^2s^4}{X^{12}|m^{\ast}\tilde{m}|^2}\right)  \cdot \mathcal{S}_j(s,\gamma) \ ds - \\ &  \left.
\int\limits_{L}^{\tilde{L}}  \frac{1}{s}\cdot \hat\Gamma\left(\frac{X^6m^{\ast}\tilde{m}}{k_2hds^2}\right) \cdot 
\frac{\partial}{\partial s}\Omega\left(\frac{us^2}{X^4k_3^2|m^{\ast}\tilde{m}|^2},\frac{k_2^2h^2d^2s^4}{X^{12}|m^{\ast}\tilde{m}|^2}\right)  \cdot \mathcal{S}_j(s,\gamma) \ ds
\right..
\end{split}
\end{equation*}
Hence, using  $\hat\Gamma(z)=O(1)$ and the bounds for $\Omega$ and $\Omega'$ in the case $0<|u|\le K$ in \eqref{Omega} and \eqref{Omegaprime}, it follows that
\begin{equation} \label{ZGomega2}
\begin{split}
& \sum\limits_{\substack{j=1\\ (j,q_1)=1}}^{q_1} |Z_{\Omega}(j)|\ll
X^{\varepsilon} \cdot \frac{X^4|m^{\ast}\tilde{m}|^2}{|u|\tilde{L}^{3}}\cdot  \sum\limits_{\substack{j=1\\ (j,q_1)=1}}^{q_1} 
\left|\mathcal{S}_j(\tilde{L},\gamma)\right|+ X^{\varepsilon}\times \\ &
\int\limits_{L}^{\tilde{L}} \left(\frac{X^4|m^{\ast}\tilde{m}|^2}{|u|s^{4}}+ \frac{X^{10}|m^{\ast}\tilde{m}|^3}{hd|u|s^6} + 
\frac{1}{s^2}+\frac{X^4|m^{\ast}\tilde{m}|}{(hd)^{1/2}|u|^{1/4}s^{7/2}}
 +\frac{X^8|u|^{1/4}|m^{\ast}\tilde{m}|}{(hd)^{3/2}s^{9/2}} \right) \times\\ & 
\sum\limits_{\substack{j=1\\ (j,q_1)=1}}^{q_1} \left|\mathcal{S}_j(s,\gamma)\right|\ ds.
\end{split}
\end{equation}

\section{Averaging over $j$ and $l_1$}
The next step is to estimate
$$
\sum\limits_{\substack{j=1\\ (j,q_1)=1}}^{q_1} \left|\mathcal{S}_j(s,w)\right|
$$
for any $w\in \mathbb{R}$, where $\mathcal{S}_j(s,w)$ is defined as in \eqref{Sswdef}.
Let $R\le X^{100}$ be a positive integer, to be specified later. Using Dirichlet's approximation theorem, there exist an integer $a$ and a positive integer $r$ such that $(a,r)=1$,
$r\le R$ and $w= a/r+\beta$ with $|\beta|\le 1/(rR)$. Using partial summation, it follows that
\begin{equation} \label{this}
\begin{split}
&  \mathcal{S}_j(s,w)\\ = & e(\beta s) \sum\limits_{\substack{L< n \le s \\ (n,6\tilde{m})=1\\ n \equiv j \bmod{q}_1}} \mu(n)\cdot e\left(\frac{a}{r}\cdot n\right) -
2\pi i\beta \int\limits_{L}^s  e(\beta t) \cdot \left(\sum\limits_{\substack{L< n \le t \\ (n,6\tilde{m})=1\\ n \equiv j \bmod{q_1}}} \mu(n)\cdot e\left(\frac{a}{r}\cdot n\right)\right) dt \\
\ll & \left| \sum\limits_{\substack{L< n \le s \\ (n,6\tilde{m})=1\\ n \equiv j \bmod{q_1}}} \mu(n)\cdot e\left(\frac{a}{r}\cdot n\right)\right| +
\frac{1}{rR} \int\limits_{L}^s  \left|\sum\limits_{\substack{L< n \le t \\ (n,6\tilde{m})=1\\ n \equiv j \bmod{q_1}}} \mu(n)\cdot e\left(\frac{a}{r}\cdot n\right)\right|\ dt. 
\end{split}
\end{equation}

Now we write $f:=(n,r)$, $n_1:=n/f$ and $r_1:=r/f$. Then 
$$
 e\left(\frac{a}{r}\cdot n\right)=e\left(\frac{a}{r_1}\cdot n_1\right)= \frac{1}{\varphi(r_1)} \cdot \sum\limits_{\chi_1 \bmod{r_1}} \overline{\chi}(an_1)\tau(\chi_1),
$$
where 
$$
\tau(\chi_1):=\sum\limits_{x=1}^{r_1} \chi_1(x)\cdot e\left(\frac{x}{r_1}\right)
$$
is the Gauss sum for the Dirichlet character $\chi_1$. Using this and
$$
\mu(n)=\begin{cases} \mu(n_1)\mu(f) & \mbox{ if } (n_1,f)=1,\\ 0 & \mbox{ if } (n_1,f)>1, \end{cases}  
$$
and detecting the coprimality condition $(n_1,6\tilde{m}f)=1$ using the principal character $\chi_0$ modulo $6\tilde{m}f$ and the congruence condition $n_1f\equiv j \bmod{q_1}$ by Dirichlet characters $\chi$ modulo $q_1$ (recall that $(j,q)=1$), we arrive at
\begin{equation*}
\begin{split}
\sum\limits_{\substack{L< n \le t \\ (n,6\tilde{m})=1\\ n \equiv j \bmod{q_1}}} \mu(n)\cdot e\left(\frac{a}{r}\cdot n\right)= & \frac{1}{\varphi(q_1)}\cdot  \sum\limits_{\chi \bmod{q_1}} \overline{\chi}(j)
\sum\limits_{f|r} \mu(f) \chi(f) \cdot \frac{1}{\varphi(r_1)} \cdot \sum\limits_{\chi_1 \bmod{r_1}} \overline{\chi}_1(a) \chi_1(f) \tau(\chi_1) \times\\ 
& \sum\limits_{L/f<n_1\le t/f} \chi\chi_0\chi_1(n_1)\mu(n_1).
\end{split}
\end{equation*}
We shall also obtain a saving by averaging over $j \bmod q$, where $(j,q)=1$. Using the orthogonality relations for Dirichlet characters, the bound $|\tau(\chi_1)|\le \sqrt{r_1}$,
and the Riemann Hypothesis for Dirichlet $L$-functions which implies that
$$
\sum\limits_{L/f<n_1\le t/f} \chi\chi_0\chi_1(n_1)\mu(n_1) \ll \left(\frac{t}{f}\right)^{1/2} X^{\varepsilon},
$$
we deduce that
\begin{equation*}
\begin{split}
& \sum\limits_{\substack{j=1\\ (j,q_1)=1}}^{q_1} \left|\sum\limits_{\substack{L< n \le t \\ (n,6\tilde{m})=1\\ n \equiv j \bmod{q_1}}} \mu(n)\cdot e\left(\frac{a}{r}\cdot n\right)\right|^2 = 
\frac{1}{\varphi(q_1)} \sum\limits_{\chi \bmod{q_1}} \left|
\sum\limits_{f|r} \mu(f) \chi(f) \cdot \frac{1}{\varphi(r_1)} \times \right. \\ &  \left. \sum\limits_{\chi_1 \bmod{r_1}} \overline{\chi}_1(a) \chi_1(f) \tau(\chi_1) \cdot
\sum\limits_{L/f<n_1\le t/f} \chi\chi_0\chi_1(n_1)\mu(n_1)\right|^2
\ll rtX^{\varepsilon},
\end{split}
\end{equation*}
and therefore, by Cauchy-Schwarz,
\begin{equation*}
\sum\limits_{\substack{j=1\\ (j,q_1)=1}}^{q_1} \left|\sum\limits_{\substack{L< n \le t \\ (n,6\tilde{m})=1\\ n \equiv j \bmod{q_1}}} \mu(n)\cdot e\left(\frac{a}{r}\cdot n\right)\right|\ll (q_1rt)^{1/2}X^{\varepsilon}.
\end{equation*}
Using this together with \eqref{this}, we get
\begin{equation*} 
\sum\limits_{\substack{j=1\\ (j,q_1)=1}}^{q_1} |\mathcal{S}_j(s,w)|\ll q_1^{1/2}\left(r^{1/2}s^{1/2}+\frac{s^{3/2}}{r^{1/2}R}\right) \le q_1^{1/2}
\left(R^{1/2}s^{1/2}+\frac{s^{3/2}}{R}\right).
\end{equation*}
Now fixing $R:= s^{2/3}$, it follows that
\begin{equation} \label{plug}  
\sum\limits_{\substack{j=1\\ (j,q_1)=1}}^{q_1} |\mathcal{S}_j(s,w)|\ll q_1^{1/2}s^{5/6}X^{\varepsilon}.
\end{equation}

\section{Proof of Theorem \ref{density2}}
To prove Theorem \ref{density2}, it remains to bound the error term $E^{-}(X)$. 
Combing \eqref{ZGsplit}, \eqref{ZGomega1}, \eqref{ZGomega2} and \eqref{plug}, we obtain
\begin{equation} \label{wutz}
\begin{split}
& \sum\limits_{\substack{j=1\\ (j,q_1)=1}}^{q_1} \left(\left|Z_G(j)\right|+\left|Z_\Omega(j)\right|\right)\ll  q_1^{1/2}X^{\varepsilon}\left(\frac{X^4|m^{\ast}\tilde{m}|}{(hd)^{1/2}|u|^{1/4}L^{5/3}}+\frac{X^{10}|m^{\ast}\tilde{m}|^2}{(hd)^{3/2}|u|^{1/4}L^{11/3}}+\right. \\ & \left.
\frac{X^8|u|^{1/4}|m^{\ast}\tilde{m}|}{(hd)^{3/2}L^{8/3}}+
\frac{X^4|m^{\ast}\tilde{m}|^2}{|u|L^{13/6}}+\frac{X^{10}|m^{\ast}\tilde{m}|^3}{hd|u|L^{25/6}}+
\frac{1}{L^{1/6}}\right).
\end{split}
\end{equation}
Recalling
\begin{equation} \label{recall}
q_1=\frac{q}{(d,|m^{\ast}|)}=\frac{k_3^3hd(m^{\ast})^2}{(d,|m^{\ast}|)}  \quad \mbox{and} \quad L:=\max\left\{\left(\frac{|m^{\ast}\tilde{m}|}{k_2hd}\right)^{1/2}X^{3-\varepsilon}, 
\frac{|u|^{1/2}X^{4-\varepsilon}}{k_2k_3hd}\right\},
\end{equation}
we deduce that
\begin{equation} \label{combining}
\sum\limits_{\substack{j=1\\ (j,q_1)=1}}^{q_1} \left(\left|Z_G(j)\right|+\left|Z_\Omega(j)\right|\right)\ll X^{\varepsilon}\cdot \frac{|m^{\ast}|}{(d,|m^{\ast}|)^{1/2}}\cdot 
\sum\limits_{i=1}^{4} u^{\alpha_i}\left|m^{\ast}\tilde{m}\right|^{\beta_i}(hd)^{\gamma_i}
X^{\delta_i},
\end{equation}
where 
\begin{equation} \label{abcd}
\begin{split}
(\alpha_1,\beta_1,\gamma_1,\delta_1):= &\left(-\frac{1}{4},\frac{1}{6},\frac{5}{6},-1\right),\\
(\alpha_2,\beta_2,\gamma_2,\delta_2):= &\left(-\frac{13}{12},1,\frac{2}{3},-\frac{8}{3}\right),\\
(\alpha_3,\beta_3,\gamma_3,\delta_3):= &\left(-1,\frac{11}{12},\frac{19}{12},-\frac{5}{2}\right),\\
(\alpha_4,\beta_4,\gamma_4,\delta_4):= &\left(-\frac{1}{12},0,\frac{2}{3},-\frac{2}{3}\right).
\end{split}
\end{equation}
Here we use the second term in the maximum in \eqref{recall} to bound the third term and sixth term in the sum on the right-hand side of \eqref{wutz} and the first term in the said maximum to bound all the other terms in the said sum. Combining \eqref{E-split}, \eqref{Enew3} and \eqref{combining}, it follows that
\begin{equation} \label{finalE}
E^-(X)\ll  X^{6+\varepsilon}+\sum\limits_{i=1}^{4}A(\alpha_i,\beta_i,\gamma_i,\delta_i),
\end{equation}
where
\begin{equation} \label{Adef} 
\begin{split}
A(\alpha,\beta,\gamma,\delta):= & X^{6+\varepsilon}  \mathop{\sum\sum}_{(hd)\le \xi^{\kappa}} 
\frac{1}{(hd)^{3/2}} \cdot 
\mathop{\sum\limits_{\substack{m^{\ast}\in \mathbb{Z}\setminus\{0\}\\ \mbox{\tiny rad}(m^{\ast})|6hd}}
\sum\limits_{\substack{\tilde{m} \in \mathbb{Z}\setminus\{0\}\\ (\tilde{m},6hd)=1}}}_{|m^{\ast}\tilde{m}|\le M}  \frac{1}{|m^{\ast}\tilde{m}|} \times\\ &
\sum\limits_{0<|u|\le U}   \left(u,hd|m^{\ast}|^2|\tilde{m}|\right)^{1/2} \sum\limits_{\substack{x_1=1\\ x_1^2\equiv -\overline{3}u \bmod{|\tilde{m}|}}}^{|\tilde{m}|} |u|^{\alpha}|m^{\ast}\tilde{m}|^{\beta}(hd)^{\gamma}X^{\delta}.
 \end{split}
\end{equation}

Now to estimate $E^-(X)$, we bound $A(\alpha,\beta,\gamma,\delta)$ for general parameters. We first  
examine the cardinality
$$
\sum\limits_{\substack{x_1=1\\ x_1^2\equiv -\overline{3}u \bmod{|\tilde{m}|}}}^{|\tilde{m}|} 1 =\sharp\{ x\in \{1,...,\tilde{m}\} : x_1^2\equiv -\overline{3}u \bmod{|\tilde{m}|}\},
$$
where we recall that $(|\tilde{m}|,6)=1$ and $\overline{3}$ is a multiplicative inverse of $3$ modulo $|\tilde{m}|$. We observe that a necessary condition for  the above congruence to be solvable is that $(u,|\tilde{m}|)$ is a perfect square. In this case, let $(u,|\tilde{m}|)=v^2$, $v>0$. Then $v|x_1$, and the congruence reduces
to $x_2^2\equiv \overline{3}u_2 \bmod{m_2}$, where $x_2=x_1/v$, $u_2=u/v^2$ and $m_2=|\tilde{m}|/v^2$. Hence,
\begin{equation*}
\begin{split}
\sharp\{ x\in \{1,...,\tilde{m}\} : x_1^2\equiv -\overline{3}u \bmod{|\tilde{m}|}\}= & v^2\cdot \sharp\{ x\in \{1,...,m_2\} : x_2^2\equiv -\overline{3}u_2 \bmod{m_2}\}\\ 
\ll & v^2 \cdot m_2^{\varepsilon}
\ll (u,|\tilde{m}|) X^{\varepsilon}.
\end{split}
\end{equation*}
We further observe that
$$
\sum\limits_{0<|u|\le U} \left(u,hd|m^{\ast}|^2|\tilde{m}|\right)^{1/2}|u|^{\alpha} \ll U^{\tilde{\alpha}}(hd|m^{\ast}\tilde{m}|)^{\varepsilon}\ll U^{\tilde{\alpha}}X^{\varepsilon} \ll \frac{\xi^{2\tilde{\alpha}}}{X^{8\tilde{\alpha}}}\cdot X^{\varepsilon}
$$
with $\tilde\alpha:=\max(0,1+\alpha)$, where we recall $U=\xi^2X^{2\varepsilon-8}$. Combining the above gives
\begin{equation} \label{first}
\sum\limits_{0<|u|\le U} \sum\limits_{\substack{x_1=1\\ x_1^2\equiv -\overline{3}u \bmod{|\tilde{m}|}}}^{|\tilde{m}|} \left(u,hd|m^{\ast}|^2|\tilde{m}|\right) |u|^{\alpha}\ll \frac{\xi^{2\tilde{\alpha}}}{X^{8\tilde{\alpha}}}\cdot X^{\varepsilon}.
\end{equation}
Furthermore, we we have
\begin{equation} \label{second}
\mathop{\sum\limits_{\substack{m^{\ast}\in \mathbb{Z}\setminus\{0\}\\ \mbox{\tiny rad}(m^{\ast})|6hd}}
\sum\limits_{\substack{\tilde{m} \in \mathbb{Z}\setminus\{0\}\\ (\tilde{m},6hd)=1}}}_{|m^{\ast}\tilde{m}|\le M} \frac{|m^{\ast}|^{1+\beta}|\tilde{m}|^{\beta}}{|m^{\ast}\tilde{m}|} \ll 
M^{\tilde{\beta}+\varepsilon} \sum\limits_{\substack{0<|m^{\ast}|\le M\\ \mbox{\tiny rad}(m^{\ast})|6hd}} 1 \ll  M^{\tilde{\beta}}(Mhd)^{\varepsilon} \ll \frac{\xi^{2\tilde\beta}}{(hd)^{\tilde\beta}X^{6\tilde\beta}}\cdot X^{\varepsilon}
\end{equation}
with $\tilde\beta:=\max(0,\beta)$, where we recall $M=\xi^2/ (hdX^{6-\varepsilon})$.  Using \eqref{Adef}, \eqref{first} and \eqref{second}, we get
\begin{equation} \label{Aesti}
\begin{split}
A(\alpha,\beta,\gamma,\delta)\ll & \xi^{2\tilde{\alpha}+2\tilde{\beta}}X^{6+\delta-8\tilde{\alpha}-6\tilde{\beta}+\varepsilon}\sum\limits_{h\le \xi^{\kappa}} h^{\gamma-\tilde{\beta}-3/2} 
\sum\limits_{d\le \xi^{\kappa}/h} d^{\gamma-\tilde{\beta}-3/2}\\
\ll & \xi^{2\tilde{\alpha}+2\tilde{\beta}+\kappa\tilde{\gamma}}X^{6+\delta-8\tilde\alpha-6\tilde{\beta}+\varepsilon}
\end{split}
\end{equation}
with $\tilde{\gamma}:=\max\{0,\gamma-\tilde{\beta}-1/2\}$. 

Now using \eqref{abcd}, \eqref{finalE} and  \eqref{Aesti}, we compute that
\begin{equation} \label{finalE-}
E^-(X)\ll X^{\varepsilon}\left(X^{6} + \xi^{11/6+\kappa/6}X^{-2}+\xi^{2}X^{-8/3}\right).
\end{equation}
Combining \eqref{splitting}, \eqref{S2est}, \eqref{S2split}, \eqref{Msplit}, \eqref{themain}, \eqref{E0est}, \eqref{Esplit}, \eqref{E+bound} and \eqref{finalE-}, we arrive at
\begin{equation*} 
\begin{split}
S(X)= & X^{10} \cdot \frac{1}{3}\cdot  \prod\limits_{p>3}  \left(1-\frac{2p-1}{p^3}\right)+\\ & O\left(X^{\varepsilon}\left(X^6+\frac{X^{16}}{\xi^2}+\frac{X^{10}}{\xi}+\xi^{(1-\kappa)/2}X^4+\xi^{2-\kappa}+\frac{\xi^{11/6+\kappa/6}}{X^2}+\frac{\xi^{2}}{X^{8/3}}\right)\right).
\end{split}
\end{equation*}
Finally, taking
$$
\kappa:=\frac{16}{31} \quad \mbox{and} \quad \xi:=X^{124/27},
$$
we obtain the result in Theorem \ref{density2}.


\begin{thebibliography}{3}

\bibitem{BB} S. Baier, T.D. Browning, {\it Inhomogeneous cubic congruences and rational points on Del Pezzo surfaces}, J. Reine Angew. Math. 680 (2013) 69--151.

\bibitem{LS} J.H. Loxton and R.A. Smith, {\it On Hua's estimate for exponential sums}, J. London Math. Soc. 26 (1982) 15--20.

\bibitem{Es} T. Estermann, {\it Einige S\"atze \"uber quadratfreie Zahlen}, Math. Ann. 105 (1931) 653--662.

\bibitem{IK} H. Iwaniec and E. Kowalski, {\it Analytic Number Theory}, American Math. Soc. Colloq. Pub. 53, American Math. Soc., 2004. 

\end{thebibliography}
\end{document}